\documentclass{siamart220329}

\usepackage{diagbox}
\usepackage{multirow}
\usepackage{cite}
\usepackage{url}
\usepackage{amsfonts,amssymb} 
\usepackage{amsmath} 
\usepackage{color} 
\usepackage{graphicx} 
\usepackage{epsfig,mathrsfs} 
\usepackage{graphicx}

\usepackage{caption} 
\usepackage{color}
\usepackage{cases}
\usepackage{subfigure}
\graphicspath{{figure/}}
\usepackage{setspace}
\usepackage{dsfont}
 \usepackage{tabularray}
  \usepackage{array}
 \usepackage{multirow}

\def\Re {{\mathbb R}}

\newtheorem{remark}{Remark}[section] 
\newtheorem{example}{Example}[section] 

\usepackage{booktabs}
\usepackage{mathtools}

\title{Error analysis of a collocation method on graded meshes for  nonlocal  diffusion problems with weakly singular kernels
	\thanks{Submitted to the editors DATE.
		\funding{The work was supported by the Science Fund for Distinguished Young Scholars of Gansu Province under Grant No. 23JRRA1020, and National Science Fund for Distinguished Young Scholars (11925204).}}}

\author{ Minghua Chen  \thanks{School of Mathematics and Statistics, Gansu Key Laboratory of Applied Mathematics and Complex Systems,
		Lanzhou University, Lanzhou 730000,  China.
		Email address: chenmh@lzu.edu.cn}
	\and Chao Min \thanks{School of Mathematics and Statistics,  Lanzhou University, Lanzhou 730000,  China.}
	\and  Jiankang Shi  \thanks{School of Mathematics and Statistics,
		Lanzhou University, Lanzhou 730000,  China.}
    \and  Jizeng Wang \thanks{
Key Laboratory of Mechanics on Disaster and Environment in Western China, the Ministry of Education,
College of Civil Engineering and Mechanics, Lanzhou University, Lanzhou 730000, China
Email address: jzwang@lzu.edu.cn}
}

\begin{document}
	
	\maketitle

	\begin{abstract}
Can graded meshes yield   more accurate numerical solution than uniform meshes?
A time-dependent nonlocal diffusion problem with a weakly singular kernel is considered using  collocation method.
For its steady-state counterpart, under the sufficiently smooth solution, 
we first clarify that the standard graded meshes are worse than  uniform meshes and may even lead to divergence; instead,  an optimal convergence rate  arises in so-called anomalous graded meshes. 
Furthermore, under  low regularity solutions, it  may suffer from a severe order reduction in (Chen, Qi, Shi and Wu, IMA J. Numer. Anal., 41 (2021) 3145--3174).  
In this case,  conversely, a sharp error estimates appears in standard graded meshes, but offering far less than first-order accuracy. 
For the time-dependent case, however, second-order convergence can be achieved on graded meshes. 
The related analysis are easily extended  for  certain multidimensional problems.
Numerical results are provided  that confirm the sharpness of the error estimates.
	\end{abstract}
	
	\begin{keywords}
		{nonlocal diffusion problems;   graded meshes; low regularity solution; error analysis.}
	\end{keywords}
	
	
	\thispagestyle{plain}
	\markboth{M. CHEN, C. MIN, J. SHI AND J. WANG}{COLLOCATION METHOD  FOR NONLOCAL PROBLEMS}

	\section{Introduction}\label{Se:intro}
 Nonlocal diffusion problems have been used to model very different applied situations  \cite{Andreu:10,SGP:13,Du:19}, for example in biology, mechanics, coagulation models, image processing,  particle systems,
and nonlocal anisotropic models for phase transition, etc.
  In this paper, 
we study an error estimates of a  collocation method on graded meshes 
for solving  the  time-dependent nonlocal diffusion problems  with   weakly singular kernels,  whose prototype equations are  \cite{Andreu:10,SGP:13}
\begin{equation}\label{equ1.1}
	\left\{ \begin{split}
		u_t(x,t) - \mathcal{L}u(x,t) &=f(x,t),    &  &  x \in \Omega,\, t>0,\\
		u(x,0)&=u_0(x),    & &   x \in \Omega,\\
		u(x,t)&=0,         & &   x \in \mathbb{R}\setminus \Omega,
	\end{split}
	\right.
\end{equation}
and its steady-state counterpart
\begin{equation}\label{equ1.2}
	\left\{ \begin{split}
		- \mathcal{L} u(x)      &=f(x), &  & x \in \Omega,\\
		u(x)&=0,        &    &x \in \mathbb{R}\setminus \Omega.
	\end{split}
	\right.
\end{equation}
The  nonlocal operator $\mathcal{L}$  is defined by, for $0<\alpha<1$,
\begin{equation}\label{equ1.3}
		\mathcal{L}u(x)=\int_{\Omega} \frac{u(y)-u(x)}{|x-y|^\alpha}dy ~~\forall x \in \Omega.
\end{equation}
 In this model, diffusion takes place inside $\Omega$ and $u$ vanishes outside $\Omega$. In the biological interpretation, there is a hostile environment outside $\Omega$, and any individual that jumps outside dies instantaneously \cite[p.\,41]{Andreu:10}.
The well-posedness (existence and uniqueness)  is addressed  in the monograph \cite[Chapter 3]{Andreu:10}.

Note that   Fredholm weakly singular integral equations of the second or third kind have the following form \cite{Atk:09,Zemyan:12}
\[
	\lambda(x) u(x)- \int^b_a \frac{u(y)}{|x-y|^\alpha}dy  =f(x),    \quad x \in (a,b) ,\quad 0< \alpha <1,
\]
where $\lambda(x)$  is a nonzero complex constant or vanishes at least once in the interval $[a,b]$, respectively.
Nevertheless, the nonlocal models  \eqref{equ1.2} may exhibit similarities but not    belong to the aforementioned Fredholm integral equations,
since the variable coefficients of  \eqref{equ1.2}  are great than zero, i.e., 
\[\lambda(x):=\int^b_a \frac{1}{|x-y|^\alpha}dy=\frac{1}{1-\alpha}\left[ \left(x-a\right)^{1-\alpha}+\left(b-x\right)^{1-\alpha} \right]>0,\quad 0< \alpha <1.\]
Indeed, there still exist some significant differences between these two models. 
For example, the inverse operators of Fredholm integral equations of the second kind  are uniformly bounded, see Theorem 4.2.1  in \cite{Atk:09}. However, the inverse operators of nonlocal models \eqref{equ1.2} are unbounded, as shown in Lemma 3.5 of \cite{Cao:20}.

There has  been some important progress in numerically solving nonlocal diffusion problems including  finite element methods \cite{WT:12}, collocation methods  \cite{TWW:13,ZhangGJ:16,CQSW:21}, fast conjugate gradient  method \cite{WT:12,Cao:20} and multigrid method \cite{CES:19}.
Among various techniques for solving nonlocal  problems \eqref{equ1.2}, collocation methods are the simplest,  but unfortunately, it poses more challenges in numerical analysis.
For example, Fredholm weakly singular integral equations of the  second kind have second-order convergence \cite[p.\,59]{Atk:09}. However, 
the numerical results for \eqref{equ1.2} with $\alpha=1$ indicate that the convergence rate appears to be close to 1.5 \cite{TWW:13}, although it remains to be proved.
In fact, it exhibits only first-order convergence  under  the   sufficiently smooth solution \cite{CQSW:21}.

As mentioned above in \cite{TWW:13,ZhangGJ:16,CQSW:21}, the convergence rates  both assume that the unknown solution has several continuous derivatives. However, the solution is rarely smoothly differentiable; instead, it often exhibits somewhat singular behavior in the neighborhood of the boundary. These phenomena arise naturally in the integral equation with singular kernels,
such as  Volettra integral equation\cite[p.\,340]{B:04}, Fredholm integral equation \cite{S:79}, time-fractional diffusion equation \cite{SOG:17}, 
and nonlocal/regional  fractional Laplacian  \cite{RS:14,CDMSM:23,HW:22,F:22} with the low regularity of the solutions  
\begin{equation}\label{equ1.4}
	\begin{split}
		\left|\frac{\partial^{\ell} }{\partial x ^{\ell}} u(x)\right|  
		& \le C[(x-a)(b-x)]^{\sigma-\ell}, ~0<\sigma<1 \ \text{ for }  \ell=0,1,2. 
	\end{split}
\end{equation}
To capture the multi-scale/singularities behaviors and achieve more accurate, the graded meshes technique 
are effectively employed  for such problems \cite{SOG:17,RS:14,HW:22}.

Can graded meshes yield  more accurate numerical solutions than uniform meshes?  For steady-state counterpart \eqref{equ1.2}, under the sufficiently smooth of the solutions, we  provide a  sharp error estimate in Theorem \ref{nnthm3.5}, namely, 
 \begin{equation*}
    \max_{1\leq i \leq 2N-1} |u(x_i)-u_{i}| 
\leq
\begin{cases}
  C N^{r-2}, & {\rm if}~~r>\frac{2}{3}, \\
  C N^{r-2}\ln N, & {\rm if}~~r=\frac{2}{3}, \\
  C N^{-2r},      & {\rm if}~~r<\frac{2}{3}.
\end{cases}
  \end{equation*}
Here  $u_i $ is the approximate solution of $u(x_{i})$,  $N$ is the number of grid points, and  $r> 1$ is the grading exponent of the  standard graded meshes; instead, for $0<r< 1$, is called anomalous graded meshes. 
These results imply   that the standard graded meshes are worse than the uniform grid and  even lead to divergence  for $r\geq 2$.
It also indicates that an optimal convergence rate (great than one) arises in anomalous graded meshes.  Conversely, a sharp error estimate (far less than one) appears in standard graded meshes with the low regularity of solution \eqref{equ1.4}, 
 since it possess the following  convergence rate  as shown in Theorem \ref{nnthm4.5}
\begin{equation*}
 \max_{1\leq i \leq 2N-1} |u(x_i)-u_{i}| 
\leq     \begin{cases}
C  N^{r-2} ,                        &{\rm if}~~  r> \frac{2}{\sigma+1},\\
C  N^{r-2} \ln N,                   &{\rm if}~~  r =\frac{2}{\sigma+1}, \\
C  N^{-r \sigma},         &{\rm if}~~  r <\frac{2}{\sigma+1}. \\
	\end{cases}
\end{equation*}
Finally, second-order convergence $\mathcal{O}\left( N^{-\min \left\{r \left(1+\sigma-\alpha\right), \,2 \right\} } \right)$ can be achieved on graded meshes
for the time-dependent nonlocal problems \eqref{equ1.1} under the low regularity of solution \eqref{equ1.4}, as demonstrated in Theorem \ref{nnthm5.2}.

	\section{Preliminaries: Numerical schemes}
In this section, we derive numerical discretization schemes for nonlocal diffusion problem \eqref{equ1.1}  and the corresponding steady-state problem \eqref{equ1.2} in $\Omega:=(a,b) \subseteq \mathbb{R}$, respectively.

Let the partition $\pi_h$ with the interval $(a,b)$
\begin{equation*}
   \pi_h: a =x_{0}<x_{1}<x_{2}<\dots<x_{2N-1}<x_{2N}=b.
\end{equation*}
 We focus on graded meshes as
  \begin{equation}\label{neq2.1}
  x_j=
  \begin{cases}	
  a+\frac{b-a}{2}\left( \frac{j}{N} \right)^r,   &    j=0,1,\dots ,N,\\
  b-\frac{b-a}{2}\left(2-\frac{j}{N} \right)^r,  &  	j=N+1,N+2,\dots ,2N
\end{cases}
  \end{equation}
with the grid sizes $h_j :=x_j-x_{j-1}$.
Here $r> 1$ is the grading exponent   of the  standard graded meshes. We extend it, for $0<r< 1$, called anomalous graded meshes. 

Let $S^{h}$ be the space of continuous piecewise-linear polynomials defined with respect to the partition $\pi_h$ and choose the standard hat functions as a basis which we denote as $\left\{\phi_{j }(x) \right\}_{0}^{2N}$.
Namely, the  piecewise linear  basis function is defined by
\begin{equation*}
\phi_j(x)=
\begin{cases}
    \frac{x-x_{j-1}}{x_j-x_{j-1}}, &  x \in \left[x_{j-1}, x_{j}\right],\\
 \frac{x_{j+1}-x}{x_{j+1}-x_j}, & x \in \left[x_{j}, x_{j+1}\right],\\
  0, &  {\rm otherwise},
\end{cases}
\end{equation*}
with $j=1,\cdots,2N-1$, 
\begin{equation*}
\phi_{0}(x)=
\begin{cases}
     \frac{x_{1}-x}{x_{1}-x_0}, & x \in \left[x_{0}, x_{1}\right],\\
     0, &  {\rm otherwise},
\end{cases}
~~~{\rm and}~~~
\phi_{2N}(x)=
\begin{cases}
    \frac{x-x_{2N-1}}{x_{2N}-x_{2N-1}}, &  x \in \left[x_{2N-1}, x_{2N}\right],\\
 0, &  {\rm otherwise}.
\end{cases}
\end{equation*}

\subsection{Collocation method for steady-state problems \eqref{equ1.2}}
Let $\Pi_{h} u(x)$ be the piecewise linear interpolation approximation for the solution $u(x)$ with  
homogeneous Dirichlet boundary conditions, namely,
\[
\Pi_{h} u(x)=\sum_{k=0}^{2N} u \left(x_{k}\right) \phi_{k}(x)=\sum_{k=1}^{2N-1} u \left(x_{k}\right) \phi_{k}(x).
\]

Taking it into \eqref{equ1.2} yields approximation for  the value of $-\mathcal{L} u(x_{i})$  given by
\begin{equation*}
\begin{split}
   -\mathcal{L}[\Pi_{h} u](x_{i}) 
   &= \int_{a}^{b} \frac{\sum_{k=1}^{2N-1} u \left(x_{k}\right) \phi_{k}(x_{i}) -\sum_{k=1}^{2N-1} u \left(x_{k}\right) \phi_{k}(y)} {\left|x_{i}-y\right|^{\alpha}} \, d{y}
     =\sum_{k=1}^{2N-1} a_{i,k} u \left(x_{k}\right)
\end{split}
    \end{equation*}
with   the coefficients 
\begin{equation}\label{neq2.2}
a_{i,k}:= \int_{a}^{b} \frac{ \phi_{k}(x_{i}) - \phi_{k}(y)} {\left|x_{i}-y\right|^{\alpha}} \, dy,~~~i,k=1,2,\cdots,2N-1.
  \end{equation}
Therefore, we can rewrite \eqref{equ1.2} as
    \begin{equation}\label{neq2.3}
      -\mathcal{L}[\Pi_{h} u](x_{i})  =f(x_{i}) +R_{i}, ~~~i=1,2,\cdots,2N-1,
    \end{equation}
with the local truncation error 
    \begin{equation}\label{nequ2.4}
      R_{i}=\mathcal{L}[u-\Pi_{h}u](x_{i})=\int_{a}^{b} {\frac{u(y)-\Pi_{h}u(y)}{|x_{i}-y|^{\alpha}}\,dy}.
    \end{equation}

Let $u_{i}$ be the approximation of $u(x_{i})$ and  $f_{i}:=f(x_{i})$. It  yields  the follow numerical scheme by \eqref{neq2.3}:
\begin{equation*}
  \mathcal{L}_h u_{i} =f_{i}~~~{\rm with}~~~\mathcal{L}_h u_{i}:=\sum_{k=1}^{2N-1}a_{i,k}u_{k}.
\end{equation*}
In particular, the above system of equations has the matrix form
\begin{equation}\label{nequ2.5}
AU=F,
\end{equation}
where  the coefficient matrix $A$ and the matrix of the grid function are defined by
\[
A=(a_{i,k})\in \mathbb{R}^{\left(2N-1\right)\times \left(2N-1\right)} , \ U=(u_{1},\dots,u_{2N-1})^T, \ F=(f_{1},\dots,f_{2N-1})^T.
\]

\subsection{Collocation method for time-dependent problems \eqref{equ1.1}}
In the time direction, we use  Crank-Nicolson scheme \cite[p.\,131]{Tho:06} on uniform meshes  with 
$t_{j}=j\tau$, $\tau=\frac{1}{M}$, $j=0,1,\cdots, M$. Here we mainly focus on the space direction, since the convergence rate with $\mathcal{O}\left( M^{-2 } \right)$ of the time discretization is well understood.

Let $u_i^j$  be the approximation of $u(x_{i},t_{j})$ and  $f_{i}^j=f(x_{i},t_j)$.
From \eqref{nequ2.5}, the full discretization of nonlocal diffusion problem  \eqref{equ1.1}
has the matrix form
\begin{equation}\label{nequ2.6}
\left(I+\frac{\tau }{2}A\right) U^{j}=\left(I-\frac{\tau }{2}A\right) U^{j-1}+ \tau F^{j-\frac{1}{2}},~~~j=1,2,\cdots,M,
\end{equation}
where  the coefficient matrix $A$ is given  in \eqref{nequ2.5} and the matrix of the grid function are defined by
\[
 U^{j}=(u_{1}^{j},\dots,u_{2N-1}^{j})^T, \ F^{j-\frac{1}{2}}=(f_{1}^{j-\frac{1}{2}},\dots,f_{2N-1}^{j-\frac{1}{2}})^T.
\]
 
\subsection{Spectral analysis for stiffness matrix $A$ in \eqref{nequ2.5}}
First, we  give some lemmas that will be used. 
\begin{lemma}\cite[p.\,28]{Quarteroni:07}\label{nnlem2.1}
A real matrix $A$ of order $n$ is positive definite  if and only if  its symmetric part $H=\frac{A+A^T}{2}$ is positive definite.
Let $H \in \mathbb{R}^{n\times n}$ be symmetric. Then $H$ is positive definite if and only if the eigenvalues of $H$ are positive.
\end{lemma}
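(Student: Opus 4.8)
The plan is to treat the two assertions separately, each through the quadratic form $x^T A x$. First I would establish the equivalence between positive definiteness of $A$ and of its symmetric part $H = \frac{A+A^T}{2}$. The key observation is that $x^T A x$ is a scalar, hence equal to its own transpose: $x^T A x = \left(x^T A x\right)^T = x^T A^T x$. Averaging these two expressions gives
\begin{equation*}
x^T A x = \frac{1}{2}\left(x^T A x + x^T A^T x\right) = x^T \frac{A+A^T}{2}\, x = x^T H x
\end{equation*}
for every $x \in \mathbb{R}^n$. Thus the two quadratic forms coincide identically, so $x^T A x > 0$ holds for all nonzero $x$ if and only if $x^T H x > 0$ does; equivalently, the skew-symmetric part $\frac{A-A^T}{2}$ contributes nothing to the quadratic form. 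This settles the first claim without invoking any spectral information.

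For the second assertion I would appeal to the spectral theorem. Since $H$ is real symmetric, there exist an orthogonal matrix $Q$ and a diagonal matrix $\Lambda = \mathrm{diag}\left(\lambda_1, \dots, \lambda_n\right)$ of real eigenvalues with $H = Q \Lambda Q^T$. Applying the change of variables $y = Q^T x$, which is a bijection on $\mathbb{R}^n$ (so that $x \neq 0$ if and only if $y \neq 0$) because $Q$ is invertible, yields
\begin{equation*}
x^T H x = y^T \Lambda y = \sum_{k=1}^n \lambda_k y_k^2.
\end{equation*}
If every $\lambda_k > 0$, this sum is strictly positive whenever $y \neq 0$, hence whenever $x \neq 0$. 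Conversely, if some $\lambda_j \le 0$, then choosing $y$ to be the $j$-th standard basis vector makes the sum equal to $\lambda_j \le 0$, contradicting positive definiteness. Hence $H$ is positive definite precisely when all of its eigenvalues are positive.

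There is no substantial obstacle here: the statement is elementary linear algebra, and the only steps requiring care are purely formal, namely that $x^T A x$ equals its own transpose because it is a $1 \times 1$ quantity, and that the orthogonal change of variables neither creates nor destroys the zero vector. The genuine work lies ahead, where this lemma is combined with the subsequent estimates on the stiffness matrix $A$ in \eqref{nequ2.5}: it reduces the positive definiteness---and thereby the invertibility and stability of the collocation scheme---to verifying positivity of the eigenvalues of the symmetric part $H$, which for the weakly singular kernel is the genuinely delicate estimate.
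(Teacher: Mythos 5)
Your proof is correct, and it is the standard argument: the skew-symmetric part of $A$ contributes nothing to the quadratic form, so $x^TAx = x^THx$ identically, and the spectral theorem handles the symmetric case. Note, however, that the paper offers no proof of this lemma at all --- it is quoted verbatim from the cited textbook (Quarteroni et al., p.~28) as a known result --- so there is no internal proof to compare against; your self-contained derivation is simply a correct filling-in of that reference.

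One small correction to your closing paragraph, which mischaracterizes how the lemma is used downstream. The paper does \emph{not} reduce invertibility or stability of the collocation scheme to positive definiteness of $H$; invertibility of the stiffness matrix is obtained from the strict diagonal dominance / M-matrix structure (Lemma \ref{nnlem2.4}, via Varga's Theorem 1.21), which gives $\Re\left(\lambda(A)\right)>0$ for all $r>0$. The present lemma is invoked in the opposite direction: combined with the numerical counter-example in Table \ref{table2.1}, where $\min\left(\lambda(H)\right)<0$, it shows that $A$ may fail to be positive definite (indeed be indefinite) on graded meshes, even though $A$ remains nonsingular. So the ``genuinely delicate estimate'' you anticipate --- proving positivity of the eigenvalues of $H$ --- is precisely what the paper demonstrates to be false in general.
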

\begin{lemma}\cite{Var:75}\label{nnlem2.2}
    Assume $A$ is diagonally dominant by rows. Then
    \begin{equation*}
        \left\|A^{-1} \right\|_{\infty} \leq \frac{1}{\delta}~~~{\rm with}~~~\delta=\min_{i} \left( \left|a_{i,i} \right| - \sum_{j\neq i} \left|a_{i,j}\right| \right).
    \end{equation*}
\end{lemma}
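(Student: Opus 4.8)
The plan is to deduce the bound on $\|A^{-1}\|_\infty$ from a single componentwise inequality, rather than manipulating the entries of $A^{-1}$ directly. Since $\|A^{-1}\|_\infty = \sup_{z\neq 0}\|A^{-1}z\|_\infty/\|z\|_\infty$, it suffices to prove
\[
\delta\,\|x\|_\infty \le \|Ax\|_\infty \qquad \text{for every } x\in\mathbb{R}^n .
\]
Indeed, writing $z=Ax$ and hence $x=A^{-1}z$, this inequality gives $\|A^{-1}z\|_\infty \le \delta^{-1}\|z\|_\infty$ for all $z$, which is precisely the asserted estimate. The whole argument therefore reduces to establishing this one scalar inequality.

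To do so I would fix $x\neq 0$ and select an index $k$ realizing the maximum, i.e.\ $|x_k|=\|x\|_\infty$. Isolating the diagonal term in the $k$-th row of $Ax$ yields $a_{k,k}x_k = (Ax)_k - \sum_{j\neq k} a_{k,j}x_j$, so by the triangle inequality together with $|x_j|\le|x_k|$ one gets
\[
|a_{k,k}|\,|x_k| \le |(Ax)_k| + \Big(\sum_{j\neq k}|a_{k,j}|\Big)\,|x_k| .
\]
Rearranging and invoking the row-dominance hypothesis in the form $|a_{k,k}|-\sum_{j\neq k}|a_{k,j}| \ge \delta$, I obtain $\delta\,|x_k| \le |(Ax)_k| \le \|Ax\|_\infty$, which is the desired inequality because $|x_k|=\|x\|_\infty$.

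The one point needing care is that the statement presupposes the existence of $A^{-1}$, but this is a by-product of the same estimate rather than a separate obstacle: if $Ax=0$ then $\delta\|x\|_\infty\le 0$, and since diagonal dominance by rows forces $\delta>0$, we conclude $x=0$, so $A$ is nonsingular. Beyond this, the argument is entirely elementary; the only genuine subtlety is the passage from the global operator norm to a single row via the maximizing index $k$, and no mesh smallness, solution regularity, or structure other than strict row diagonal dominance enters at this stage.
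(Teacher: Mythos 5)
Your proof is correct and is essentially the classical argument behind Varah's bound, which the paper itself does not prove but simply cites from \cite{Var:75}: bound $\|Ax\|_\infty$ from below at a maximizing index via the triangle inequality and row dominance, then translate $\delta\|x\|_\infty\le\|Ax\|_\infty$ into $\|A^{-1}\|_\infty\le 1/\delta$. The only point to phrase carefully is that ``diagonally dominant by rows'' must be read as \emph{strictly} diagonally dominant (so that $\delta>0$); weak dominance alone gives only $\delta\ge 0$, but this strict reading is the intended one, since it is exactly the property established for $A$ in Lemma~\ref{nnlem2.4} before this lemma is applied.
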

\begin{lemma}\label{nnlem2.3}
 Let the matrix $G_\alpha=(g_{i,j})\in \mathbb{R}^{\left(2N-1\right)\times \left(2N-1\right)} $ and its the element $g_{i,j}:=\int_{a}^{b} \frac{ \phi_{j}({y})} {\left|x_{i}-y\right|^{\alpha}} \, dy$. Then  $G_\alpha$ is a positive matrix and  $g_{i,j}$ is explicitly computed as 
\begin{equation*}
			g_{i,j}=\frac{1}{\left(1-\alpha\right)\left(2-\alpha\right)}C_{j}Q_j^i>0
		\end{equation*}
		with
		\begin{equation*}
			C_j= \left(\frac{1}{h_{j}},-\frac{1}{h_{j}}-\frac{1}{h_{j+1}},\frac{1}{h_{j+1}}\right)
			~~~{\rm and}~~~  Q_{j}^{i}=\left( \begin{array}{c}	\left| x_{j-1}-x_i \right|^{2-\alpha}\\	\left| x_j-x_i \right|^{2-\alpha}\\	\left| x_{j+1}-x_i \right|^{2-\alpha}\\\end{array} \right).
		\end{equation*}
\end{lemma}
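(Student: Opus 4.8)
The plan is to treat the two assertions separately: entrywise positivity of $G_\alpha$ is essentially immediate, whereas the closed form for $g_{i,j}$ rests on a single differential identity for the kernel followed by one integration by parts. For positivity I would argue directly from the defining integral $g_{i,j}=\int_a^b |x_i-y|^{-\alpha}\,\phi_j(y)\,dy$. Since $0<\alpha<1$, the kernel $|x_i-y|^{-\alpha}$ is weakly singular and therefore integrable on $(a,b)$, so $g_{i,j}$ is a finite number; the hat function $\phi_j$ is nonnegative and strictly positive on the set $(x_{j-1},x_{j+1})$ of positive measure, and the kernel is strictly positive, so the integral is strictly positive. Hence every entry of $G_\alpha$ is positive, i.e.\ $G_\alpha$ is a positive matrix.

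For the explicit evaluation, the device I would use is the pointwise identity, valid for every $y\neq x_i$,
\[
|x_i-y|^{-\alpha}=\frac{1}{(1-\alpha)(2-\alpha)}\frac{d^2}{dy^2}|x_i-y|^{2-\alpha},
\]
which is verified by differentiating $|x_i-y|^{2-\alpha}$ twice on each side of $y=x_i$. Writing $g(y):=|x_i-y|^{2-\alpha}$ and substituting into the defining integral, I would integrate by parts once. Since $\phi_j$ is an interior hat function ($1\le j\le 2N-1$), it vanishes at $a=x_0$ and $b=x_{2N}$, so the boundary term disappears and one is left with $-\int_a^b g'(y)\,\phi_j'(y)\,dy$. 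Because $\phi_j'$ is piecewise constant, equal to $1/h_j$ on $(x_{j-1},x_j)$, to $-1/h_{j+1}$ on $(x_j,x_{j+1})$, and zero elsewhere, the fundamental theorem of calculus reduces the remaining integral to nodal values of $g$: the pieces telescope to $g(x_j)-g(x_{j-1})$ and $g(x_{j+1})-g(x_j)$. Collecting the coefficients reproduces exactly $C_jQ_j^i$ with $g(x_k)=|x_k-x_i|^{2-\alpha}$---equivalently, this is the pairing of $g$ with the three point masses that constitute $\phi_j''$---and restoring the prefactor $1/[(1-\alpha)(2-\alpha)]$ gives the stated formula.

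The step I expect to require the most care is the legitimacy of the integration by parts across the singular point $y=x_i$, which occurs precisely when $x_i$ lies in the support $[x_{j-1},x_{j+1}]$ of $\phi_j$. Here I would observe that, because $2-\alpha>1$, the function $g$ is $C^1$ across $y=x_i$, with derivative $g'(y)=(2-\alpha)\,\mathrm{sgn}(y-x_i)\,|y-x_i|^{1-\alpha}$ absolutely continuous on $[a,b]$ and second derivative $g''=(1-\alpha)(2-\alpha)|x_i-y|^{-\alpha}$ integrable there; this justifies the manipulation with no principal-value issues (alternatively one splits the integral at $x_i$ and lets the split point approach $x_i$, the contributions there cancelling since $g'$ is continuous). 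Finally, as a consistency check on the sign, I would note that $C_jQ_j^i$ equals $(h_j+h_{j+1})$ times the second divided difference of the strictly convex map $s\mapsto|s-x_i|^{2-\alpha}$ over the three distinct nodes $x_{j-1}<x_j<x_{j+1}$, which is automatically positive; this recovers $g_{i,j}>0$ independently of the integral argument and corroborates the displayed identity.
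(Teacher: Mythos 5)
Your proposal is correct, and it takes a genuinely different route from the paper. The paper obtains the closed form by direct antidifferentiation (the diagonal entry $g_{i,i}=C_\alpha\left(h_i^{1-\alpha}+h_{i+1}^{1-\alpha}\right)$ is simply ``checked'', and the off-diagonal expression is written out), and then its real work goes into positivity: for $j\neq i$ it rewrites $C_jQ_j^i$ as $C_\alpha\frac{h_j+h_{j+1}}{h_jh_{j+1}}$ times the difference between a convex combination of $\left|x_i-x_{j\pm1}\right|^{2-\alpha}$ and $\left|x_i-x_j\right|^{2-\alpha}$, and invokes Jensen's inequality for the strictly convex map $x\mapsto x^{2-\alpha}$, using the identity $\frac{h_j}{h_j+h_{j+1}}\left|x_i-x_{j+1}\right|+\frac{h_{j+1}}{h_j+h_{j+1}}\left|x_i-x_{j-1}\right|=\left|x_i-x_j\right|$. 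You instead get positivity for free from the defining integral (strictly positive, integrable kernel against a nonnegative hat function that is positive on a set of positive measure), which is the most elementary argument available and handles all entries at once, and you derive the closed form systematically via the identity $|x_i-y|^{-\alpha}=\frac{1}{(1-\alpha)(2-\alpha)}\frac{d^2}{dy^2}|x_i-y|^{2-\alpha}$ followed by one integration by parts against $\phi_j$, with the singular point correctly handled because $2-\alpha>1$ makes $g$ a $C^1$ function with absolutely continuous derivative. Your closing divided-difference check is, in substance, exactly the paper's Jensen step ($C_jQ_j^i=(h_j+h_{j+1})$ times a second divided difference of a strictly convex function), so you recover the paper's argument as a corollary. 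What each approach buys: yours explains where the formula comes from and decouples positivity from the algebra; the paper's is self-contained at the level of the explicit expression, proving its sign without appeal to the integral representation.
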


\begin{proof}
Taking  $C_{\alpha}:=\frac{1}{\left(1-\alpha\right)\left(2-\alpha\right)}$,
we can check  $g_{i,i} 
       =C_{\alpha} \left(h_{i+1}^{1-\alpha} +h_{i}^{1-\alpha} \right)>0.
      $
On the other hand, for $j\neq i$, there exists 
 \begin{equation*}
 \begin{split}
    g_{i,j} 
      & =  C_{\alpha} 
   \left[\frac{\left|x_{i}-x_{j+1} \right|^{2-\alpha}}{h_{j+1}} 
           - \frac{h_{j}+h_{j+1}}{h_{j}h_{j+1}}\left|x_{i}-x_{j}\right|^{2-\alpha}  
             +\frac{\left|x_{i}-x_{j-1} \right|^{2-\alpha}}{h_{j}}\right]\\
      & =C_{\alpha} \frac{h_{j}+h_{j+1}}{h_{j}h_{j+1}}
      \left[\frac{h_{j} \left|x_{i}-x_{j+1} \right|^{2-\alpha}}{h_{j}+h_{j+1}} 
           - \left|x_{i}-x_{j}\right|^{2-\alpha}  
             +\frac{h_{j+1} \left|x_{i}-x_{j-1} \right|^{2-\alpha}}{h_{j}+h_{j+1}}\right].\\
  \end{split} 
 \end{equation*}
 Since 
$\frac{h_{j} }{h_{j}+h_{j+1}} \left|x_{i}-x_{j+1} \right| 
+\frac{h_{j+1} }{h_{j}+h_{j+1}}\left|x_{i}-x_{j-1} \right|
= \left|x_{i}-x_{j}\right|$ 
and $x \mapsto x^{2-\alpha}$ is a convex function for $x\geq 0$ under $0<\alpha<1$,
by Jensen's inequality we have 
\[\frac{h_{j} }{h_{j}+h_{j+1}} \left|x_{i}-x_{j+1} \right|^{2-\alpha}
             +\frac{h_{j+1} }{h_{j}+h_{j+1}}\left|x_{i}-x_{j-1} \right|^{2-\alpha}
             >\left|x_{i}-x_{j}\right|^{2-\alpha},~j\neq i.
\]
The proof is completed. 
\end{proof}

From \eqref{neq2.2}, \eqref{nequ2.5} and  Lemma \ref{nnlem2.3},   the entries of the stiffness matrix $A=(a_{ij})\in\mathbb{R}^{\left(2N-1\right)\times \left(2N-1\right)}$ with $\alpha \in (0,1)$ can be explicitly evaluated by
\begin{equation}\label{nequ2.7}
  A=D_\alpha-G_\alpha,
\end{equation}
where the diagonal matrix $D_\alpha$ is defined by
\begin{equation*}
    D_\alpha={\rm diag}\left( d_{1},d_{2},\cdots,d_{2N-1}\right)~~{\rm with}~~d_{i}:=\int_{a}^{b}{\frac{dy}{|x_{i}-y|^{\alpha}}}
\end{equation*}
 for $i=1,2,\cdots,2N-1$.

\begin{lemma}\label{nnlem2.4}
Let the matrix $A$ be defined by \eqref{nequ2.5}. Then  $A$ is a strictly diagonally dominant matrix by rows with positive entries on the diagonal and nonpositive off-diagonal entries.
Moreover, the linear equation \eqref{nequ2.5} has a unique solution. 
\end{lemma}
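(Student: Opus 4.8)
The plan is to read off the entries of $A$ from the defining formula \eqref{neq2.2} using the interpolation property of the hat functions, and then obtain strict diagonal dominance from a single partition-of-unity identity rather than by estimating the diagonal and off-diagonal parts separately. Since $\phi_k(x_i)=\delta_{ik}$, formula \eqref{neq2.2} at once gives the off-diagonal entries $a_{i,k}=-g_{i,k}$ for $k\neq i$ and the diagonal entry $a_{i,i}=d_i-g_{i,i}$, where $g_{i,k}$ is exactly the quantity treated in Lemma \ref{nnlem2.3}. Because that lemma shows $g_{i,k}>0$ for all $i,k$, the off-diagonal entries $a_{i,k}=-g_{i,k}<0$ are nonpositive, which is one of the asserted sign properties.

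For the remaining two sign claims (positive diagonal and strict row dominance) I would compute the $i$-th row sum of $A$ in closed form. Summing \eqref{neq2.2} over $k=1,\dots,2N-1$ and interchanging sum and integral gives
\[
\sum_{k=1}^{2N-1}a_{i,k}=\int_a^b \frac{\sum_{k=1}^{2N-1}\bigl(\phi_k(x_i)-\phi_k(y)\bigr)}{|x_i-y|^\alpha}\,dy .
\]
Here $\sum_{k=1}^{2N-1}\phi_k(x_i)=1$ because $x_i$ is an interior node, while the full family $\{\phi_k\}_{k=0}^{2N}$ is a partition of unity, so $\sum_{k=1}^{2N-1}\phi_k(y)=1-\phi_0(y)-\phi_{2N}(y)$. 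The bracket therefore collapses to $\phi_0(y)+\phi_{2N}(y)$, leaving
\[
\sum_{k=1}^{2N-1}a_{i,k}=\int_a^b \frac{\phi_0(y)+\phi_{2N}(y)}{|x_i-y|^\alpha}\,dy>0 ,
\]
the positivity being clear since the two boundary hat functions are nonnegative, are positive on sets of positive measure, and the kernel is positive. Writing this row sum as $a_{i,i}-\sum_{k\neq i}g_{i,k}>0$ and recalling $\sum_{k\neq i}|a_{i,k}|=\sum_{k\neq i}g_{i,k}\ge 0$ yields both the strict diagonal dominance $a_{i,i}>\sum_{k\neq i}|a_{i,k}|$ and, a fortiori, the positivity $a_{i,i}>0$.

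Finally, unique solvability of \eqref{nequ2.5} follows immediately: strict diagonal dominance makes the defect $\delta=\min_i\bigl(|a_{i,i}|-\sum_{k\neq i}|a_{i,k}|\bigr)$ strictly positive, so Lemma \ref{nnlem2.2} bounds $\|A^{-1}\|_\infty\le 1/\delta<\infty$ and in particular shows $A$ is nonsingular; hence $AU=F$ has the unique solution $U=A^{-1}F$. I expect the only genuinely substantive point to be the row-sum computation: recognizing that the partition-of-unity cancellation leaves precisely the two boundary contributions $\phi_0$ and $\phi_{2N}$ is what converts an otherwise delicate comparison of $a_{i,i}$ against the off-diagonal mass into a one-line positivity statement. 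Everything else is bookkeeping resting on Lemma \ref{nnlem2.3}.
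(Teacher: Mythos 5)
Your proof is correct and takes essentially the same route as the paper: the paper's key computation \eqref{nequ2.9} is exactly your partition-of-unity cancellation, reducing the $i$-th row sum to $\int_a^b \frac{\phi_0(y)+\phi_{2N}(y)}{|x_i-y|^{\alpha}}\,dy>0$, with the off-diagonal signs coming from Lemma \ref{nnlem2.3} in both arguments. The only minor differences are organizational: you deduce $a_{i,i}>0$ from the row-sum positivity together with the nonpositive off-diagonal entries, whereas the paper verifies it separately by the explicit evaluation \eqref{nequ2.8}, and you conclude nonsingularity from Lemma \ref{nnlem2.2} (Varah's bound) rather than from the M-matrix theorem of Varga cited in the paper --- both are equally valid.
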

\begin{proof}
It is evident to observe that
\begin{equation}\label{nequ2.8}
\begin{split}
   a_{ii}   &  =\int_{x_{i-1}}^{x_{i+1}}{\frac{1-\phi_{i}\left(y\right)}{\left|x_{i}-y\right|^{\alpha}} \, dy}\\
   & =\frac{1}{1-\alpha}\left[(x_{i}-a)^{1-\alpha}+(b-x_{i})^{1-\alpha} \right] - \frac{1}{\left(1-\alpha\right)\left(2-\alpha\right)} \left(h_{i}^{1-\alpha} + h_{i+1}^{1-\alpha} \right) >0\\
\end{split}
\end{equation}
and $a_{i,j}=-g_{i,j}<0$ for  $j\neq i$ by Lemma \ref{nnlem2.3}.

On the other hand, for $1\leq i\leq 2N-1$, using Taylor series expansion, we have 
\begin{equation}\label{nequ2.9}
  \begin{split}
     \sum_{j=1}^{2N-1}a_{i,j}  
     & = \int_{a}^{b}{\frac{dy}{|x_{i}-y|^{\alpha}}} -\sum_{j=1}^{2N-1}\int_{x_{j-1}}^{x_{j+1}}  {\frac{\phi_{j}\left(y\right)}{\left|x_{i}-y\right|^{\alpha}}\, dy} \\
       &= \int_{x_{0}}^{x_{1}}  {\frac{\phi_{0}\left(y\right)}{\left|x_{i}-y\right|^{\alpha}}\, dy} + \int_{x_{2N-1}}^{x_{2N}}  {\frac{\phi_{2N}\left(y\right)}{\left|x_{i}-y\right|^{\alpha}}\, dy}\\
        &= \frac{1}{1-\alpha} \left[\left(x_{i}-x_{0}\right)^{1-\alpha} 
        - \frac{\left( x_{i}-x_{0}\right)^{2-\alpha}-\left( x_{i}-x_{1}\right)^{2-\alpha}}{\left(2-\alpha\right)h_{1}} \right]\\
       &\quad+\frac{1}{1-\alpha} \left[\left(x_{2N}-x_{i} \right)^{1-\alpha}
       -\frac{\left( x_{2N}-x_{i}\right)^{2-\alpha}-\left( x_{2N-1}-x_{i}\right)^{2-\alpha}}{\left(2-\alpha\right)h_{2N}} \right]\\
      & \geq \frac{1}{2}\left[  h_{1} \left(x_{i}-x_{0}\right)^{-\alpha} + h_{2N} \left(x_{2N}-x_{i}\right)^{-\alpha}\right]>0,
  \end{split}
\end{equation}
which implies that the matrix $A$ is M-matrix.
From Theorem 1.21 of \cite{Varga:00}, the matrix $A$ is nonsigular. The proof is completed. 
\end{proof}


Let the condition number $\kappa_{p}= \left\|A\right\|_{p} \left\|A^{-1}\right\|_{p}$ with $p=1,2,\cdots,\infty$. Then there is
\begin{lemma}
    Let the matrix $A$ is defined as \eqref{nequ2.5}. Then the condition number
    \begin{equation*}
        \kappa_{\infty}=\left\|A\right\|_{\infty} \left\|A^{-1}\right\|_{\infty} = \mathcal{O} \left(N^{r}\right),
    \end{equation*}
where $r$ is the grading exponent of  graded meshes.
\end{lemma}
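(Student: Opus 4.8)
The plan is to estimate $\|A\|_\infty$ and $\|A^{-1}\|_\infty$ separately, since $\kappa_\infty=\|A\|_\infty\,\|A^{-1}\|_\infty$, and to show that the growth $N^r$ comes entirely from the inverse while the forward norm stays bounded.

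First I would control $\|A\|_\infty=\max_i\sum_{j=1}^{2N-1}|a_{i,j}|$. By Lemma \ref{nnlem2.4} the diagonal entries are positive and the off-diagonal entries are $a_{i,j}=-g_{i,j}<0$, so the $i$-th absolute row sum equals $a_{i,i}+\sum_{j\neq i}g_{i,j}$. Using the partial partition of unity $\sum_{j=1}^{2N-1}\phi_j(y)\le 1$ gives $\sum_{j=1}^{2N-1}g_{i,j}\le d_i$, whence $\sum_{j\neq i}g_{i,j}\le d_i-g_{i,i}=a_{i,i}$ and each absolute row sum is at most $2a_{i,i}\le 2d_i$. Since from \eqref{nequ2.8} one has $d_i=\tfrac{1}{1-\alpha}\big[(x_i-a)^{1-\alpha}+(b-x_i)^{1-\alpha}\big]\le \tfrac{2}{1-\alpha}(b-a)^{1-\alpha}$ uniformly in $N$, I obtain $\|A\|_\infty=\mathcal{O}(1)$. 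The point worth emphasizing is that, although the graded mesh \eqref{neq2.1} crowds nodes near the boundary, the weakly singular kernel integrated against the hat functions keeps all entries, and hence the row sums, bounded independently of $N$.

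Next I would estimate $\|A^{-1}\|_\infty$ through the strict diagonal dominance established in Lemma \ref{nnlem2.4}. Lemma \ref{nnlem2.2} yields $\|A^{-1}\|_\infty\le 1/\delta$ with $\delta=\min_i\big(a_{i,i}-\sum_{j\neq i}|a_{i,j}|\big)=\min_i\sum_{j=1}^{2N-1}a_{i,j}$, i.e. $\delta$ is exactly the smallest signed row sum. The lower bound \eqref{nequ2.9} then gives
\[
\delta\ \ge\ \tfrac12\min_{1\le i\le 2N-1}\Big[h_1\,(x_i-x_0)^{-\alpha}+h_{2N}\,(x_{2N}-x_i)^{-\alpha}\Big].
\]
Since $t\mapsto t^{-\alpha}$ is decreasing and $x_i-x_0,\,x_{2N}-x_i\le b-a$, each bracket is at least $(b-a)^{-\alpha}(h_1+h_{2N})$; and from \eqref{neq2.1} the boundary widths satisfy $h_1=h_{2N}=\tfrac{b-a}{2}N^{-r}$. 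Hence $\delta\ge c\,N^{-r}$ with $c=\tfrac12(b-a)^{1-\alpha}>0$ depending only on $a,b,\alpha$, so that $\|A^{-1}\|_\infty\le 1/\delta\le C\,N^{r}$.

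Combining the two estimates gives $\kappa_\infty\le \|A\|_\infty\,\|A^{-1}\|_\infty=\mathcal{O}(1)\cdot\mathcal{O}(N^{r})=\mathcal{O}(N^{r})$. The main obstacle, and the only place where the grading exponent enters, is the lower bound on $\delta$: one must recognize that the worst-conditioned row is governed by the minimal boundary mesh width $h_1\sim N^{-r}$ rather than by any interior quantity, and that the identity $\delta=\min_i\sum_j a_{i,j}$ lets the boundary estimate \eqref{nequ2.9} be applied directly. If one additionally wanted sharpness rather than merely the $\mathcal{O}$ bound, a matching lower bound $\|A^{-1}\|_\infty\ge c'N^r$ would be required, for instance by testing $A^{-1}$ against a unit vector concentrated near the boundary node at which $\delta$ is attained.
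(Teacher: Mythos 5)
Your proposal is correct and follows essentially the same route as the paper: bound $\|A\|_\infty \le 2\max_i a_{i,i} = \mathcal{O}(1)$ using the sign structure of $A$, then apply Varah's bound (Lemma \ref{nnlem2.2}) with $\delta = \min_i \sum_j a_{i,j} \ge c\,N^{-r}$ obtained from \eqref{nequ2.9} and the boundary mesh widths $h_1 = h_{2N} = \tfrac{b-a}{2}N^{-r}$. Your derivation of the off-diagonal row-sum bound via the partition of unity is a minor cosmetic variation of the paper's appeal to \eqref{nequ2.8}--\eqref{nequ2.9}, and your closing remark that sharpness would need a matching lower bound is a fair observation, though the lemma only asserts the upper bound.
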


\begin{proof}
From Lemma \ref{nnlem2.4} and \eqref{nequ2.8}, we have
\begin{equation*}
    \left\|A\right\|_{\infty}\leq 2 \max_{1\leq i \leq 2N-1} a_{i,i}  \leq \frac{4}{1-\alpha} \left(b-a\right)^{1-\alpha}.
\end{equation*}
On the other hand, it yields $\left\|A^{-1}\right\|_{\infty} \leq 2 \left(\frac{b-a}{2}\right)^{\alpha} N^{r}$ by Lemma \ref{nnlem2.2},
since 
    \begin{equation*}
        \begin{split}
            \left|a_{i,i} \right| - \sum_{j\neq i} \left|a_{i,j}\right|
            & =\sum_{j=1}^{2N-1} a_{i,j}
             \geq \frac{1}{2} \left(\frac{b-a}{2}\right)^{-\alpha} N^{-r}. 
        \end{split}
    \end{equation*}
The proof is completed.
\end{proof}

\begin{remark}
From   Lemma \ref{nnlem2.4} and Theorem $1.21$ of \cite[p.\,23]{Varga:00}, it yields  
\[\Re\left(\lambda(A)\right)>0\] and  $A$  nonsingular for $r>0$. In particular, the matrix $A$ is symmetric positive-definite for uniform meshes with $r=1$. 

However, from Lemma \ref{nnlem2.1} and counter-example in Table \ref{table2.1}, it shows that
$$\min\left( \lambda\left(H\right) \right)<0,~~\max\left( \lambda\left(H\right) \right)>0~~{\rm with}~~H=\frac{A+A^T}{2}.$$
 That is,
the matrix $A$ may be a nonsymmetric and indefinite for  graded meshes.
\end{remark}

    \begin{table}[h]
\centering
\caption{Maximum  and minimum eigenvalues of $H=\frac{A+A^{T}}{2}$ with $\alpha=0.5$, $N=500$}\label{table2.1}
\begin{tblr}{
  cells = {c},
  hlines,
  vlines,
}
$r$                & 0.2     & 0.9     & 1      & 1.1     & 4       \\
$\max(\lambda(H))$ & 8.8728  & 5.5807  & 5.5766 & 5.5727  & 5.4974  \\
$\min(\lambda(H))$ & -4.6418 & -0.0023 & 0.0039 & -0.0046 & -0.8304 
\end{tblr}
\end{table}

\section{Error analysis: Steady-state problems with smooth solution}
In this section,   we first clarify that standard graded meshes perform worse than the uniform grid, and may even lead to divergence for the steady-state counterpart, despite the solution being sufficiently smooth. However,  optimal convergence rates  arise in anomalous graded meshes.

Without loss of generality,  we take $\Omega=\left( 0,2T\right)$ and  rewrite \eqref{neq2.1} as
\begin{equation}\label{eqn3.1}
x_j= \begin{cases}
T\left( \frac{j}{N} \right) ^r,	     &{\rm for}~~ j=0,1,\dots ,N,\\
2T-T\left( 2-\frac{j}{N} \right) ^r, &{\rm for}~~ j=N+1,N+2,\dots ,2N.
\end{cases}
\end{equation}
From the mean value theorem and the definition of $\{h_j\}_{j=0}^{2N}$, it follows that \cite{CDMSM:23,SOG:17}
\begin{equation}\label{eqn3.2}
h_j =x_j-x_{j-1} \le
	\begin{cases}
	CN^{-r}j^{r-1},        \ &{\rm for}~~ j=1,\dots, N, \\
	CN^{-r}(2N+1-j)^{r-1}, \ &{\rm for}~~ j=N+1,\dots, 2N.
	\end{cases}
\end{equation}
Note that $h_{j}\leq CN^{-1}$ for standard graded meshes $r\geq1$; however, $h_1>{N}^{-1}$ for anomalous graded meshes $0<r<1$.

\emph{Notation.} Throughout this article and above,  $C$ denotes a positive constant, not necessarily the same at different occurrences,  that is independent of~$N$ and of any index such as~$i$ or~$j$.
For any real number $s\in \mathbb{R}$, $\lceil s \rceil$ represents the smallest integer that is not less than~$s$.

\subsection{Local truncation error}
We next study the local truncation error  for \eqref{nequ2.4} under the smooth solution. From \eqref{nequ2.4}, we have
\begin{equation}\label{eqn3.3}
  R_{i}= \sum_{k=1}^{2N} {\int_{x_{k-1}}^{x_{k}} \frac{u \left( y \right) -\Pi_h u \left( y \right)}{\left|x_i -y \right|^{\alpha}} \,dy} =\sum_{k=1}^{2N} {\mathcal{T}_{i,k}}
\end{equation}
 with
 \begin{equation}\label{eqn3.4}
   \mathcal{T}_{i,k} := \int_{x_{k-1}}^{x_{k}} \frac{u \left( y \right) -\Pi_h u \left( y \right)}{\left|x_i -y \right|^{\alpha}} \,dy.
 \end{equation}

\begin{lemma}\label{nnlem3.1}
If $u(x)\in C^{2}(\bar{\Omega})$ and $r\geq 1$, then there exists a constant $C$ such that
\begin{equation*}
 \left| R_{i} \right| \leq C N^{-2}
\end{equation*}
for $i=1,2,\cdots,2N-1$.
\end{lemma}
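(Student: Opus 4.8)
The plan is to start from the splitting $R_i=\sum_{k=1}^{2N}\mathcal{T}_{i,k}$ of \eqref{eqn3.3}--\eqref{eqn3.4} and to control each $\mathcal{T}_{i,k}$ through the elementary error bound for continuous piecewise-linear interpolation. Since $u\in C^{2}(\bar\Omega)$, on every cell $[x_{k-1},x_k]$ one has the standard pointwise estimate
\begin{equation*}
|u(y)-\Pi_h u(y)|\le \tfrac{1}{8}h_k^{2}\,\|u''\|_{\infty},\qquad y\in[x_{k-1},x_k],
\end{equation*}
where $\|u''\|_{\infty}$ is a fixed constant. Because $\tfrac18 h_k^{2}\|u''\|_{\infty}$ is constant on the $k$-th cell, it can be pulled out of the integral in \eqref{eqn3.4}, giving $|\mathcal{T}_{i,k}|\le \tfrac18 h_k^{2}\|u''\|_{\infty}\int_{x_{k-1}}^{x_k}|x_i-y|^{-\alpha}\,dy$.

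The second step is to sum over $k$. I would uniformly replace $h_k^{2}$ by $\max_{1\le k\le 2N}h_k^{2}$, after which the remaining cell integrals reassemble into the single global integral
\begin{equation*}
\sum_{k=1}^{2N}\int_{x_{k-1}}^{x_k}\frac{dy}{|x_i-y|^{\alpha}}=\int_a^b\frac{dy}{|x_i-y|^{\alpha}}=\frac{(x_i-a)^{1-\alpha}+(b-x_i)^{1-\alpha}}{1-\alpha}\le \frac{2(b-a)^{1-\alpha}}{1-\alpha}.
\end{equation*}
The key point here is that, although the two cells with $k=i$ and $k=i+1$ carry the weak singularity $|x_i-y|^{-\alpha}$, this singularity is integrable for $0<\alpha<1$, so no cell has to be treated separately: the full sum is bounded by a constant independent of $i$ and $N$.

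The final step uses the grading hypothesis $r\ge 1$. From the mesh estimate \eqref{eqn3.2}, the cell sizes satisfy $h_k\le CN^{-1}$ for all $k$ when $r\ge 1$ (the maximal cell being the central one, $h_N$ or $h_{N+1}$), hence $\max_k h_k^{2}\le CN^{-2}$. Combining the three steps yields $|R_i|\le CN^{-2}$ uniformly in $i$, as claimed.

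I expect the main (and essentially the only genuine) subtlety to be the recognition that the crude factorization $\max_k h_k^{2}\cdot\int_a^b|x_i-y|^{-\alpha}\,dy$ already suffices; one exploits neither cancellation nor the precise position of $x_i$ relative to the mesh. This is also exactly where the restriction $r\ge1$ enters: for $0<r<1$ the first cell obeys $h_1>N^{-1}$, as noted right after \eqref{eqn3.2}, so $\max_k h_k^{2}$ is no longer $\mathcal{O}(N^{-2})$ and this simple argument breaks down, consistent with the sharper, separate treatment the anomalous regime requires later.
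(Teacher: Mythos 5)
Your proposal is correct and follows essentially the same route as the paper's own proof: the cellwise interpolation bound $|\mathcal{T}_{i,k}|\le C h_k^2\|u''\|_\infty\int_{x_{k-1}}^{x_k}|x_i-y|^{-\alpha}\,dy$, then $\max_k h_k^2\le CN^{-2}$ for $r\ge 1$, and reassembly of the cell integrals into the single bounded integral $\int_a^b|x_i-y|^{-\alpha}\,dy$. The only cosmetic difference is that the paper keeps the per-cell maximum of $|u_{xx}|$ via the mean value theorem rather than the global sup norm, which changes nothing.
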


\begin{proof}
  Since $u(x)\in C^{2}(\bar{\Omega})$, by mean value theorem, there exists 
  \begin{equation*}
    \begin{split}
      \left| R_{i} \right| 
      & \leq \sum_{k=1}^{2N} \left| \mathcal{T}_{i,k}  \right|\\
         & \leq C \sum_{k=1}^{2N}h_{k}^{2} \left(\max_{s\in[x_{k-1}, x_{k}]} \left| u_{xx}(s)\right|\right) 
         \int_{x_{k-1}}^{x_{k}}  \left| x_{i}-y \right|^{-\alpha} \,dy\\
         & \leq C {N}^{-2} \int_{x_{0}}^{x_{2N}}  \left| x_{i}-y \right|^{-\alpha} \,dy \leq CN^{-2},
    \end{split}
  \end{equation*}
 for $i=1,2,\cdots,2N-1$. The proof is completed. 
\end{proof}

The conclusion of Lemma \ref{nnlem3.1} may not hold for the anomalous graded meshes $r<1$ due to the presence of $h_{1}>N^{-1}$.

\begin{lemma}\label{nnlem3.2}
If $u(x)\in C^{2}(\bar{\Omega})$ and $r<1$, then there exists a constant $C$ such that
\begin{equation*}
 \sum_{k=1}^{N} \left|\mathcal{T}_{i,k}\right| \leq
   \begin{cases}
         C  N^{-r\left(3-\alpha\right)} i^{-r\alpha},       & {\rm if}~~3r-2<0, \\
         C  N^{-r\left(3-\alpha\right)} i^{-r\alpha} \ln N, & {\rm if}~~3r-2=0, \\
         C  N^{-\left(2-r\alpha\right)} i^{-r\alpha},       & {\rm if}~~3r-2>0
         \end{cases}
\end{equation*}
for $i=1,2,\cdots,N$.
\end{lemma}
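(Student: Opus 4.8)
The plan is to start from the pointwise interpolation estimate already used in Lemma~\ref{nnlem3.1}: since $u\in C^2(\bar\Omega)$, on each subinterval $[x_{k-1},x_k]$ one has $|u(y)-\Pi_h u(y)|\le C h_k^2$, so that from \eqref{eqn3.4}
\begin{equation*}
|\mathcal{T}_{i,k}| \le C\,h_k^2 \int_{x_{k-1}}^{x_k} |x_i-y|^{-\alpha}\,dy .
\end{equation*}
The estimate then reduces to controlling $\sum_{k=1}^N h_k^2 \int_{x_{k-1}}^{x_k}|x_i-y|^{-\alpha}\,dy$, and the two key tools are the mesh bound $h_k\le CN^{-r}k^{r-1}$ from \eqref{eqn3.2} and the identity $x_i=T(i/N)^r$, whence $x_i^{-\alpha}=T^{-\alpha}N^{r\alpha}i^{-r\alpha}$; this last relation is exactly what produces the factor $i^{-r\alpha}$ common to all three cases.

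I would split the index range into a \emph{far} part, where $|x_k-x_i|\gtrsim x_i$ (concretely $k\le i/2$ or $k\ge 2i$), and a \emph{near} part $i/2<k<2i$ surrounding the singularity $y=x_i$. On the far part, $|x_i-y|\ge c\,x_i$ for $y\in[x_{k-1},x_k]$, so $\int_{x_{k-1}}^{x_k}|x_i-y|^{-\alpha}\,dy\le C h_k x_i^{-\alpha}$ and hence $|\mathcal{T}_{i,k}|\le C h_k^3 x_i^{-\alpha}$. Summing and using \eqref{eqn3.2} gives
\begin{equation*}
\sum_{\text{far}} |\mathcal{T}_{i,k}| \le C\,x_i^{-\alpha}\sum_{k=1}^N h_k^3 \le C\,N^{r\alpha}i^{-r\alpha}\,N^{-3r}\sum_{k=1}^N k^{3r-3}.
\end{equation*}
The trichotomy in the statement is now precisely the trichotomy of $\sum_{k=1}^N k^{3r-3}$, which is $O(1)$ when $3r-3<-1$ (i.e.\ $3r-2<0$), $O(\ln N)$ when $3r-3=-1$ (i.e.\ $3r-2=0$), and $O(N^{3r-2})$ when $3r-3>-1$ (i.e.\ $3r-2>0$); substituting each case reproduces exactly $CN^{-r(3-\alpha)}i^{-r\alpha}$, $CN^{-r(3-\alpha)}i^{-r\alpha}\ln N$, and $CN^{-(2-r\alpha)}i^{-r\alpha}$.

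It remains to show the near part does not dominate. There the mesh is locally quasi-uniform, $h_k\le C h_i$ for $i/2<k<2i$, while $\sum_{\text{near}}\int_{x_{k-1}}^{x_k}|x_i-y|^{-\alpha}\,dy\le \int_{|y-x_i|\le C x_i}|x_i-y|^{-\alpha}\,dy\le C x_i^{1-\alpha}$, so that $\sum_{\text{near}}|\mathcal{T}_{i,k}|\le C h_i^2 x_i^{1-\alpha}$. Inserting $h_i\le CN^{-r}i^{r-1}$ and $x_i=T(i/N)^r$ yields $CN^{-r(3-\alpha)}i^{3r-2-r\alpha}$, and a direct comparison—using $i\le N$ together with the sign of $3r-2$ to control the surplus power $i^{3r-2}$—shows that this near contribution is, in each of the three cases, no larger than the far-part estimate above.

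The main obstacle I anticipate is precisely this near-diagonal region: one must integrate the weak singularity $|x_i-y|^{-\alpha}$ across the collocation point while tracking the grading, and then verify, case by case through the threshold $r=2/3$, that the singular contribution is absorbed by the far-field estimate rather than competing with it. A secondary, purely bookkeeping difficulty is the handling of the extreme indices (small $i$, or $k$ immediately adjacent to $i$), where the clean separation $|x_k-x_i|\gtrsim x_i$ degenerates and a few neighboring intervals must be folded into the near part by hand.
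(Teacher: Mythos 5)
Your proposal is correct and follows essentially the same route as the paper's proof: the same pointwise bound $|\mathcal{T}_{i,k}|\le Ch_k^2\int_{x_{k-1}}^{x_k}|x_i-y|^{-\alpha}\,dy$, the same mesh estimate \eqref{eqn3.2}, the same splitting of the index range at $i/2$ and $\min\{2i,N\}$, and the same trichotomy of the series $\sum_k k^{3r-3}$. The only difference is organizational: the paper treats the near-diagonal indices ($\lceil i/2\rceil<k<i$, $k=i$, $k=i+1$, $i+2\le k\le\min\{2i,N\}$) in four separate estimates, while you absorb them into the single integral $Ch_i^2\int_{|y-x_i|\le Cx_i}|x_i-y|^{-\alpha}\,dy\le Ch_i^2x_i^{1-\alpha}$, which reproduces the paper's near-field bound $CN^{-r(3-\alpha)}i^{r(3-\alpha)-2}$ term for term.
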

\begin{proof}
  Since $u\in C^{2}\left(\bar{\Omega}\right)$, there exists a constant $C$ such that 
  \begin{equation}\label{eqn3.5}
   \left| \mathcal{T}_{i,k}\right|
   \leq C \int_{x_{k-1}}^{x_{k}} {\frac{\left(y-x_{k-1}\right)  \left(x_{k}-y\right)}{\left|x_{i}-y\right|^{\alpha}}} \, dy 
   \leq C h_{k}^{2} \int_{x_{k-1}}^{x_{k}} {\frac{1}{\left|x_{i}-y\right|^{\alpha}}} \, dy. 
  \end{equation}
  
 We next estimate  the local truncation error.  
   From \eqref{eqn3.5}, one has
  \begin{equation*}
    \left|\mathcal{T}_{1,1}\right|
    \leq C \int_{x_{0}}^{x_{1}} {y\left(x_{1}-y\right)^{1-\alpha}} \, dy \leq C x_{1}^{3-\alpha} =C N^{-r\left(3-\alpha\right)},
  \end{equation*}
and
  \begin{equation*}
  \begin{split}
     \left|\mathcal{T}_{i,1}\right|
       & \leq C h_{1}^{3} \left(x_{i}-x_{1}\right)^{-\alpha}  
        \leq C N^{-r\left(3-\alpha\right)} i^{-r\alpha},~~i>1.
  \end{split}   
  \end{equation*}
  
  For $1<k\leq\lceil\frac{i}{2}\rceil$, using  \eqref{eqn3.5} and \eqref{eqn3.2}, we calculate 
  \begin{equation*}
    \begin{split}
       \sum_{k=2}^{\lceil\frac{i}{2}\rceil} \left|\mathcal{T}_{i,k}\right| 
       & \leq C \sum_{k=2}^{\lceil\frac{i}{2}\rceil}  h_{k}^{3} \left(x_{i}-x_{k}\right)^{-\alpha }
       \leq C \sum_{k=2}^{\lceil\frac{i}{2}\rceil}  h_{k}^{3} \left(x_{i}-x_{\lceil\frac{i}{2}\rceil}\right)^{-\alpha }\\
         & \leq C \sum_{k=2}^{\lceil\frac{i}{2}\rceil}  N^{-r\left(3-\alpha\right)} i^{-r\alpha} k^{3r-3}\\
         & \leq 
         \begin{cases}
         C  N^{-r\left(3-\alpha\right)} i^{-r\alpha} ,         & {\rm if}~~3r-2<0, \\
         C  N^{-r\left(3-\alpha\right)} i^{-r\alpha} \ln i,    & {\rm if}~~3r-2=0, \\
         C  N^{-r\left(3-\alpha\right)} i^{-r\alpha} i^{3r-2}, & {\rm if}~~3r-2>0.
         \end{cases}
    \end{split}
  \end{equation*}

   For $\lceil\frac{i}{2}\rceil<k<i$, we also obtain
  \begin{equation*}
    \begin{split}
       \sum_{k=\lceil\frac{i}{2}\rceil+1}^{i-1}  \left|\mathcal{T}_{i,k}\right|
        & \leq C \left(N^{-r}i^{r-1}\right)^{2} \int_{x_{\lceil\frac{i}{2}\rceil}}^{x_{i-1}} {\left(x_{i}-y\right)^{-\alpha}} \, dy \\
         & \leq C \left(N^{-r}i^{r-1}\right)^{2} \left(x_{i}-x_{\lceil\frac{i}{2}\rceil}\right)^{1-\alpha} \\
         &  \leq C N^{-r\left(3-\alpha\right)} i^{r\left(3-\alpha\right)-2}.
    \end{split}
  \end{equation*}
  We  deduce the following, for $i>1$,
  \begin{equation*}
    \begin{split}
       \left| \mathcal{T}_{i,i} \right|
        & \leq C h_{i}^{3-\alpha} 
        \leq N^{-r\left(3-\alpha\right)} i^{r\left(3-\alpha\right)-\left(3-\alpha\right)}, \\
    \end{split}
  \end{equation*}
  and
  \begin{equation*}
    \begin{split}
       \left|\mathcal{T}_{i,i+1}\right|
       \leq C h_{i+1}^{3-\alpha} 
       \leq C N^{-r\left(3-\alpha\right)} i^{r\left(3-\alpha\right)-\left(3-\alpha\right)}.
    \end{split}
  \end{equation*}
  
  Let ${J}=\min\left\{2i,N\right\}$. From \eqref{eqn3.5} and \eqref{eqn3.2}, we have
  \begin{equation*}
    \begin{split}
       \sum_{k=i+2}^{J} \left|\mathcal{T}_{i,k}\right|
       & \leq C \sum_{k=i+2}^{J} h_{k}^{2} \int_{x_{k-1}}^{x_{k}} {\left(y-x_{i}\right)^{-\alpha}} \, dy \\
         & \leq C N^{-2r} i^{2r-2} \int_{x_{i}}^{x_{J}} {\left(y-x_{i}\right)^{-\alpha}} \, dy\\
         & \leq C N^{-r\left(3-\alpha\right)} i^{r\left(3-\alpha\right)-2},
    \end{split}
  \end{equation*}
and for the special case $J=2i<N$, it yields 
  \begin{equation*}
    \begin{split}
       \sum_{k=J+1}^{N} \left|\mathcal{T}_{i,k}\right|
       & \leq C\sum_{k=J+1}^{N}  h_{k}^{3} \left(x_{k-1}-x_{i}\right)^{-\alpha} \\
         &\leq C \sum_{k=J+1}^{N} N^{-r\left(3-\alpha\right)} i^{-r\alpha}  k^{3r-3}\\
         & \leq  
         \begin{cases}
          CN^{-r\left(3-\alpha\right)} i^{-r\alpha},       & {\rm if}~~3r-2<0, \\
          CN^{-r\left(3-\alpha\right)} i^{-r\alpha}\ln N,  & {\rm if}~~3r-2=0, \\
          CN^{-\left(2-r\alpha\right)} i^{-r\alpha},       & {\rm if}~~3r-2>0.
          \end{cases}
    \end{split}
  \end{equation*}
The proof is completed.
\end{proof}

\begin{lemma}\label{nnlem3.3}
If $u(x)\in C^{2}(\bar{\Omega})$ and $r<1$, then there exists a constant $C$ such that
\begin{equation*} 
 \sum_{k=N+1}^{2N} \left|\mathcal{T}_{i,k}\right| \leq
   \begin{cases}
         C  N^{-r\left(3-\alpha\right)} i^{-r\alpha},       & {\rm if}~~3r-2<0, \\
         C  N^{-r\left(3-\alpha\right)} i^{-r\alpha} \ln N, & {\rm if}~~3r-2=0, \\
         C  N^{-\left(2-r\alpha\right)} i^{-r\alpha},       & {\rm if}~~3r-2>0
         \end{cases}
\end{equation*}
for $i=1,2,\cdots,N$.
\end{lemma}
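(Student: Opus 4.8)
The plan is to estimate the right-half sum $\sum_{k=N+1}^{2N}\left|\mathcal{T}_{i,k}\right|$ by reducing it, through the symmetry of the graded mesh \eqref{eqn3.1} about its midpoint $x_N=T$, to a one-sided version of the estimate already carried out in Lemma \ref{nnlem3.2}. First I would record the structural simplification available for a left-half collocation index $i\le N$: every interval $[x_{k-1},x_k]$ with $k\ge N+1$ lies in $[T,2T]$ whereas $x_i\le T$, so that $\left|x_i-y\right|=y-x_i>0$ throughout and the integrand in \eqref{eqn3.4} has no interior singularity. The mesh is symmetric, $x_k=2T-x_{2N-k}$ and $h_k=h_{2N+1-k}$, so the substitution $y\mapsto 2T-y$ together with the relabelling $k\mapsto m=2N+1-k$ turns the sum into $\sum_{m=1}^{N}\bigl|\widetilde{\mathcal{T}}_{2N-i,\,m}\bigr|$ for the reflected datum $\widetilde u(z):=u(2T-z)\in C^2(\bar\Omega)$, a sum in which the collocation node $x_{2N-i}$ now lies to the right of all the intervals $m=1,\dots,N$. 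In the distinguished case $i=N$ the reflection is a fixed point, $2N-i=N$, and the bound is precisely Lemma \ref{nnlem3.2} applied to $\widetilde u$; this already explains why Lemmas \ref{nnlem3.2} and \ref{nnlem3.3} carry the same right-hand side.

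For $1\le i<N$ the reflected collocation index $2N-i$ falls outside the range of Lemma \ref{nnlem3.2}, so I would estimate the sum directly, splitting it by the distance from the interval to $x_i$ in close analogy with the regions of Lemma \ref{nnlem3.2}, the essential difference being that no diagonal term $\mathcal{T}_{i,i}$ appears because the collocation point is exterior to all intervals. On each piece I would insert the interpolation bound \eqref{eqn3.5}, namely $\left|\mathcal{T}_{i,k}\right|\le C h_k^2\int_{x_{k-1}}^{x_k}\left|x_i-y\right|^{-\alpha}\,dy$, and the grid-size estimate \eqref{eqn3.2}. Writing $\ell=2N+1-k$, so that $h_k\le CN^{-r}\ell^{r-1}$ with $\ell$ running from $1$ (near $x_{2N}=2T$) to $N$ (near the midpoint), the whole dichotomy of the statement is produced by the geometric-type sum $\sum_{\ell}\ell^{3r-3}$, which is bounded by a constant when $3r-2<0$, by $C\ln N$ when $3r-2=0$, and by $CN^{3r-2}$ when $3r-2>0$; multiplication by the prefactor $N^{-r(3-\alpha)}$ gives the three cases $N^{-r(3-\alpha)}$, $N^{-r(3-\alpha)}\ln N$ and $N^{-(2-r\alpha)}$ respectively, while the factor $i^{-r\alpha}$ is recovered from the separation $\left|x_i-y\right|^{-\alpha}$ exactly as the same factor arose in Lemma \ref{nnlem3.2} from $(x_i-x_1)^{-\alpha}\approx\bigl(T(i/N)^r\bigr)^{-\alpha}$.

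The hard part will be making these bounds uniform in $i$ as $i\uparrow N$. For $i$ bounded away from $N$ the gap $T-x_i=T\bigl(1-(i/N)^r\bigr)$ between the collocation point and the right half is of order one, the kernel $\left|x_i-y\right|^{-\alpha}$ is harmless, and the sum is controlled crudely by a sum of the type $\sum_k h_k^{3}$; but as $i\to N$ this gap collapses, the intervals nearest the midpoint become adjacent to $x_i$, and one must recover the near-diagonal behaviour $\left|\mathcal{T}_{i,k}\right|\le C h_k^{3-\alpha}$ that governs the limiting case $i=N$. I therefore expect the delicate step to be showing that the blow-up of $(T-x_i)^{-\alpha}$ near the midpoint is absorbed by the factor $h_k^{2}$, so that the prefactor $i^{-r\alpha}$ and the three-case dichotomy emerge uniformly over $1\le i\le N$; the endpoint $i=N$, where the sum coincides with that of Lemma \ref{nnlem3.2}, provides a convenient consistency check.
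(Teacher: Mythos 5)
Your reflection idea is the right starting point---it is also how the paper's proof begins---but the way you deploy it creates a gap that you then acknowledge without closing. By reflecting the collocation point along with the intervals, you arrive at a sum with collocation index $2N-i$, which lies outside the index range $\{1,\dots,N\}$ of Lemma \ref{nnlem3.2} for every $i<N$; your substitute is a direct estimation whose crucial case, $i$ close to $N$ (where the gap $T-x_i$ collapses and the nearest right-half intervals become adjacent to $x_i$), is explicitly deferred: you ``expect the delicate step to be showing that the blow-up of $(T-x_i)^{-\alpha}$ is absorbed by $h_k^{2}$,'' but you never carry it out. That unproved uniformity in $i$ is precisely the content of the lemma, so as written the argument is incomplete. (A smaller inaccuracy: the factor $i^{-r\alpha}$ does not arise from a separation of size $x_i$ as it does in Lemma \ref{nnlem3.2}---the distance from $x_i$ to the right half is at least $T-x_i$, not of order $x_i$---it enters only through comparison with the left-half sum.)

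The paper closes this gap with one elementary inequality instead of a new estimation: after the substitution $\zeta=2T-y$ it does \emph{not} move the collocation point, but dominates the kernel. Since $x_i\le T$ and $\zeta\le T$, one has $2T-x_i-\zeta\ge \zeta-x_i\ge 0$ when $\zeta\in[x_{2N-k},x_{2N-k+1}]$ with $N+1\le k\le 2N-i$, and $2T-x_i-\zeta\ge x_i-\zeta\ge 0$ when $k>2N-i$. Combined with the mesh symmetry $h_k=h_{2N-k+1}$, this bounds each reflected term by $C\,h_m^{2}\int_{x_{m-1}}^{x_m}\left|x_i-\zeta\right|^{-\alpha}d\zeta$ with $m=2N-k+1\in\{1,\dots,N\}$, i.e., term-by-term by the sum that the proof of Lemma \ref{nnlem3.2} estimates with the \emph{same} index $i$---including its diagonal and near-diagonal terms of size $C h^{3-\alpha}$, which are exactly what handles $i$ near $N$. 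Uniformity in $i$ is then automatic and no separate ``delicate step'' remains. If you replace your second and third paragraphs by this domination, your first paragraph together with Lemma \ref{nnlem3.2} yields the complete proof.
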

\begin{proof}
    For $N+1\leq k \leq 2N-i$, taking $\zeta =2T-y$ and using \eqref{eqn3.1}, \eqref{eqn3.2}, \eqref{eqn3.5},   it yields 
    \begin{equation*}
       \begin{split}
        \left|T_{i,k} \right| &\leq C h_{k}^{2} \int_{x_{k-1}}^{x_{k}} \left(y-x_{i}\right)^{-\alpha} \, dy
        = C h_{2N-k+1}^{2} \int_{x_{k-1}}^{x_{k}} \left(y-2T+2T-x_{i}\right)^{-\alpha} \, dy\\
        &  =C h_{2N-k+1}^{2} \!\int_{x_{2N-k}}^{x_{2N-k+1}} \!\! \left(2T-x_{i} -\zeta\right)^{-\alpha} \, d\zeta
        \leq C h_{2N-k+1}^{2} \!\int_{x_{2N-k}}^{x_{2N-k+1}} \!\! \left(\zeta-x_{i}\right)^{-\alpha}\, d\zeta,
    \end{split} 
    \end{equation*}
    since $2T-x_{i}-\zeta \geq \zeta-x_{i}\geq0$ when $\zeta\in[x_{2N-k},x_{2N-k+1}]$.

  On the other hand, taking $\zeta =2T-y$, for $k>2N-i$,  there exits
    \begin{equation*}
         \begin{split}
        \left|T_{i,k} \right|
        & \leq C h_{2N-k+1}^{2} \! \int_{x_{2N-k}}^{x_{2N-k+1}} \!\!\left(2T-x_{i} -\zeta\right)^{-\alpha} \, d\zeta 
        \leq C h_{2N-k+1}^{2} \! \int_{x_{2N-k}}^{x_{2N-k+1}} \!\!\left(x_{i}-\zeta\right)^{-\alpha}\, d\zeta,
    \end{split} 
    \end{equation*}
    because $2T-x_{i}-\zeta \geq x_{i}-\zeta\geq0$ when $\zeta\in[x_{2N-k},x_{2N-k+1}]$.
    The similar arguments can be performed as Lemma \ref{nnlem3.2}, the desired results is obtained. 
\end{proof}

\begin{lemma}\label{nnlem3.4}
    If $u(x)\in C^{2}(\bar{\Omega})$ and $r<1$, then there exists a constant  $C$ such that 
\begin{equation*}
 \left|R_{i}\right| \leq \sum_{k=1}^{2N} \left|\mathcal{T}_{i,k}\right| \leq
   \begin{cases}
      C N^{-r\left(3-\alpha\right)} i^{-r\alpha},       & {\rm if}~~3r-2< 0, \\
      C N^{-r\left(3-\alpha\right)} i^{-r\alpha} \ln N, & {\rm if}~~3r-2= 0, \\
      C N^{-\left(2-r\alpha\right)} i^{-r\alpha},       & {\rm if}~~3r-2> 0
      \end{cases}
\end{equation*}
for $i=1,2,\cdots,N$, and
\begin{equation*}
 \left|R_{i}\right| \leq \sum_{k=1}^{2N} \left|\mathcal{T}_{i,k}\right| \leq
   \begin{cases}
      C N^{-r\left(3-\alpha\right)} \left(2N- i\right)^{-r\alpha},       & {\rm if}~~3r-2< 0, \\
      C N^{-r\left(3-\alpha\right)} \left(2N- i\right)^{-r\alpha} \ln N, & {\rm if}~~3r-2= 0, \\
      C N^{-\left(2-r\alpha\right)} \left(2N- i\right)^{-r\alpha},       & {\rm if}~~3r-2> 0
      \end{cases}
\end{equation*}
for $i=N+1,N+2,\cdots,2N-1$.
\end{lemma}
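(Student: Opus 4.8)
The plan is to observe that the first assertion, for $i=1,\dots,N$, is essentially immediate. Since $R_i=\sum_{k=1}^{2N}\mathcal{T}_{i,k}$, I would split the sum at the midpoint index $k=N$ and write $\sum_{k=1}^{2N}\left|\mathcal{T}_{i,k}\right|=\sum_{k=1}^{N}\left|\mathcal{T}_{i,k}\right|+\sum_{k=N+1}^{2N}\left|\mathcal{T}_{i,k}\right|$. The two pieces are controlled exactly by Lemma \ref{nnlem3.2} and Lemma \ref{nnlem3.3}, respectively, and both obey the same three-case bound in terms of $i^{-r\alpha}$. Adding them merely doubles the right-hand side, which is absorbed into the constant $C$. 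This yields the first displayed estimate without any new computation.

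For the second assertion ($i=N+1,\dots,2N-1$), I would exploit the reflection symmetry of the graded mesh rather than repeat the entire case analysis. First I would record that the mesh \eqref{eqn3.1} is symmetric about the midpoint $x=T$, i.e.\ $x_{2N-j}=2T-x_j$ for $0\le j\le 2N$; this is a direct verification from the two branches of \eqref{eqn3.1}. As a consequence the hat functions satisfy $\phi_k(2T-z)=\phi_{2N-k}(z)$, since reflection carries the support $[x_{k-1},x_{k+1}]$ and peak $x_k$ of $\phi_k$ onto the support and peak of $\phi_{2N-k}$.

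Next I would set $v(x):=u(2T-x)$, noting that $v\in C^{2}(\bar{\Omega})$ with $\left|v_{xx}\right|=\left|u_{xx}\right|$, so $v$ satisfies the same smoothness hypothesis. Performing the change of variable $y\mapsto 2T-z$ in \eqref{nequ2.4}, and combining the identity $(\Pi_h u)(2T-z)=\sum_k u(x_k)\phi_{2N-k}(z)=\sum_m v(x_m)\phi_m(z)=(\Pi_h v)(z)$ with $\left|x_i-(2T-z)\right|=\left|z-x_{2N-i}\right|$, I expect to obtain the clean transfer identity $R_i[u]=R_{2N-i}[v]$, where the bracket records the solution driving the truncation error. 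Since $2N-i$ runs through $N-1,\dots,1$ as $i$ runs through $N+1,\dots,2N-1$, applying the already-established first estimate to $R_{2N-i}[v]$ produces precisely the claimed bound with $(2N-i)^{-r\alpha}$ in place of $i^{-r\alpha}$.

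The only delicate point is verifying that the interpolation operator commutes with reflection, namely that $(\Pi_h u)(2T-z)=(\Pi_h v)(z)$; this is where the mesh symmetry $x_{2N-j}=2T-x_j$ and the induced relation $\phi_k(2T-z)=\phi_{2N-k}(z)$ do all the work, and a careless index shift would spoil the argument. Everything else is bookkeeping and a single change of variables, so I anticipate no genuine analytic difficulty beyond what Lemmas \ref{nnlem3.2} and \ref{nnlem3.3} already supply.
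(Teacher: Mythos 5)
Your proposal is correct, and for the first half it coincides with the paper: the paper's proof of Lemma \ref{nnlem3.4} simply combines Lemmas \ref{nnlem3.2} and \ref{nnlem3.3}, exactly as you do. Where you genuinely depart is the second half, $i=N+1,\dots,2N-1$: the paper omits it with the remark that it ``can be similarly proved'' (i.e.\ one is expected to rerun the whole case analysis with mirrored indices), whereas you reduce it to the first half via the reflection $y\mapsto 2T-z$. Your key identities are all valid: the graded mesh \eqref{eqn3.1} satisfies $x_{2N-j}=2T-x_j$, hence $\phi_k(2T-z)=\phi_{2N-k}(z)$, hence $\Pi_h$ commutes with reflection, and $v(x):=u(2T-x)$ has the same $C^2$ bounds as $u$, so the constants in Lemmas \ref{nnlem3.2}--\ref{nnlem3.3} are unchanged; moreover $2N-i$ indeed stays in the range $\{1,\dots,N-1\}$ where the first half applies. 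This buys a rigorous proof of the omitted half at the cost of a single change of variables, and it is in the same spirit as the substitution $\zeta=2T-y$ that the paper already uses inside the proof of Lemma \ref{nnlem3.3}. One small tightening is needed: the lemma bounds $\sum_{k=1}^{2N}\left|\mathcal{T}_{i,k}\right|$, not only $\left|R_i\right|$, and your transfer identity $R_i[u]=R_{2N-i}[v]$ by itself controls only the latter. The fix is immediate: apply the same change of variables on each cell $[x_{k-1},x_k]$ to obtain the termwise identity $\mathcal{T}_{i,k}[u]=\mathcal{T}_{2N-i,\,2N-k+1}[v]$, so the two sums of absolute values coincide term by term (the map $k\mapsto 2N-k+1$ is a bijection of $\{1,\dots,2N\}$) and the first half applies verbatim.
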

\begin{proof}
The first part of this lemma can be obtained by  Lemmas  \ref{nnlem3.2} and \ref{nnlem3.3}. For second part can be similarly proved, we omit it here.
\end{proof}

\subsection{Convergence analysis}
Let $e_{k}:=u\left(x_{k}\right)-u_{k}$ with $e_{0}=e_{2N}=0$ and 
  $e=\left(e_{1},e_{2},\cdots,e_{2N-1}\right)^T.$ From \eqref{neq2.3} and \eqref{nequ2.5}, it leads to 
  \begin{equation}\label{eq4.2}
    \sum_{k=1}^{2N-1} a_{i,k} e_{k}= R_{i}~~~{\rm for} ~~~i,k=1,2,\cdots,2N-1.
  \end{equation}
  
Let $\left|e_{i_{0}}\right|:=\| e\|_\infty=\max_{1\leq k\leq 2N-1 } \left|e_{k}\right|$.
By Lemma \ref{nnlem2.4}, we have
   \begin{equation}\label{eqn3.6}
     \begin{split}
      \left|R_{i_{0}}\right|
       & = \left|a_{i_{0},i_{0}}e_{i_{0}} +\sum_{k=1,k\neq i_{0}}^{2N-1} a_{i_{0},k}e_{k} \right| \geq a_{i_{0},i_{0}} \left|e_{i_{0}}\right|- \sum_{k=1,k\neq i_{0}}^{2N-1} \left|a_{i_{0},k}\right| \left|e_{k}\right|  \\
        & \geq \left(\sum_{k=1}^{2N-1} a_{i_{0},k}\right) \left|e_{i_{0}}\right|.
   \end{split} 
   \end{equation}
 Then we have the following result.
\begin{theorem}\label{nnthm3.5}
  If $u(x)\in C^{2}\left(\bar{\Omega}\right)$ and $u_{k} $ is the approximate solution of $u(x_{k})$ computed by the discretization scheme \eqref{nequ2.5}, then
  \begin{equation}\label{eq4.1}
    \max_{1\leq i \leq 2N-1} |u\left(x_{i}\right)-u_{i}| 
\leq
\begin{cases}
  C N^{r-2},      & {\rm if}~~r>\frac{2}{3},  \\
  C N^{r-2}\ln N, & {\rm if}~~r=\frac{2}{3},\\
  C N^{-2r},      & {\rm if}~~0<r<\frac{2}{3}.
\end{cases}
  \end{equation}
\end{theorem}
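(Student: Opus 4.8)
The plan is to exploit the strict diagonal dominance already recorded in Lemma~\ref{nnlem2.4} to convert the error equation \eqref{eq4.2} into a pointwise bound, and then to balance the local truncation error against the row sums of $A$. Writing $|e_{i_0}|=\|e\|_\infty$, the estimate \eqref{eqn3.6} immediately gives
\begin{equation*}
\|e\|_\infty \;\le\; \frac{|R_{i_0}|}{\sum_{k=1}^{2N-1} a_{i_0,k}},
\end{equation*}
so everything reduces to an upper bound for the numerator and a matching lower bound for the denominator. The numerator is controlled by Lemma~\ref{nnlem3.1} when $r\ge 1$ and by Lemma~\ref{nnlem3.4} when $r<1$; the denominator is controlled by the row-sum inequality \eqref{nequ2.9}.

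First I would make the lower bound on the row sum explicit on the mesh \eqref{eqn3.1}. Since $h_1=h_{2N}=TN^{-r}$ and, for $i\le N$, $x_i-x_0=T(i/N)^r$, the dominant boundary term in \eqref{nequ2.9} yields
\begin{equation*}
\sum_{k=1}^{2N-1} a_{i,k} \;\ge\; \tfrac12\,h_1\,(x_i-x_0)^{-\alpha} \;\ge\; C\,N^{-r(1-\alpha)}\, i^{-r\alpha},
\qquad 1\le i\le N,
\end{equation*}
with the symmetric statement (in $2N-i$) for $i>N$ coming from the $h_{2N}$ term. In particular the row sums are smallest in the middle of the domain, where they are of size $C N^{-r}$.

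With these two ingredients the three regimes follow by matching exponents. For $r\ge 1$ the truncation bound $|R_{i_0}|\le CN^{-2}$ from Lemma~\ref{nnlem3.1} is uniform in $i_0$, so dividing by the smallest row sum $CN^{-r}$ gives $\|e\|_\infty\le CN^{r-2}$, consistent with the case $r>\frac23$. For $r<1$ the sharper, $i_0$-dependent bounds of Lemma~\ref{nnlem3.4} carry a factor $i_0^{-r\alpha}$ that cancels exactly against the $i_0^{-r\alpha}$ in the denominator; this cancellation is the crucial point and is the reason one must \emph{not} use the minimal row sum here. Carrying it out in each subcase of Lemma~\ref{nnlem3.4} gives $C N^{-r(3-\alpha)+r(1-\alpha)}=CN^{-2r}$ when $3r-2<0$, the same with an extra $\ln N$ when $3r-2=0$ (and $N^{-2r}=N^{r-2}$ there), and $C N^{-(2-r\alpha)+r(1-\alpha)}=CN^{r-2}$ when $3r-2>0$. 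Assembling the subcases and recalling that the sign of $3r-2$ matches the comparison of $r$ with $\frac23$ produces \eqref{eq4.1}.

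The main obstacle I anticipate is precisely the bookkeeping of the spatially varying factors: the naive route of inserting $\max_i|R_i|$ over $\min_i\sum_k a_{i,k}$ loses the $i^{-r\alpha}$ cancellation and yields a suboptimal rate for $r<1$, so one must keep the argument pointwise in $i_0$ and confirm that the boundary term selected in \eqref{nequ2.9} matches the side on which $i_0$ lies (using the $h_1$ term for $i_0\le N$ and the $h_{2N}$ term otherwise, via the second half of Lemma~\ref{nnlem3.4}). Verifying that the two exponent computations genuinely meet at $r=\frac23$, where $-2r=r-2=-\frac43$, is the small but essential consistency check that glues the regimes together.
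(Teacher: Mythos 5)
Your proposal is correct and follows essentially the same route as the paper: the pointwise bound $\|e\|_\infty \le |R_{i_0}|/\sum_k a_{i_0,k}$ from \eqref{eqn3.6}, the uniform truncation bound of Lemma~\ref{nnlem3.1} against a uniform row-sum bound $CN^{-r}$ for $r\ge 1$, and for $r<1$ the $i_0$-dependent bounds of Lemma~\ref{nnlem3.4} divided by $h_1(x_{i_0}-x_0)^{-\alpha}$ (or $h_{2N}(x_{2N}-x_{i_0})^{-\alpha}$ for $i_0>N$), with exactly the cancellation of the $i_0^{-r\alpha}$ factors that the paper exploits. The exponent arithmetic and the matching of the regimes at $r=\tfrac23$ agree with the paper's proof.
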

\begin{proof}      
From   \eqref{nequ2.9}, \eqref{eqn3.1}, \eqref{eqn3.6} and Lemma  \ref{nnlem3.1}, we  know that, for $r\geq1$,
      \begin{equation*}
        \left\|e\right\|_{\infty} \leq \frac{\left|R_{i_{0}}\right|}{\sum_{k=1}^{2N-1}a_{i_{0},k}}
        \leq C \frac{N^{-2}}{N^{-r}\left(\left(x_{i_{0}}-x_{0}\right)^{-\alpha} +  \left(x_{2N}-x_{i_{0}}\right)^{-\alpha}\right)} 
        \leq C N^{r-2},
      \end{equation*}
since 
$\left(x_{i_{0}}-x_{0}\right)^{-\alpha} +  \left(x_{2N}-x_{i_{0}}\right)^{-\alpha}\geq 2T^{-\alpha}$.

We now consider the case for anomalous graded meshes $0<r<1$.
According to \eqref{nequ2.9},  \eqref{eqn3.6} and Lemma  \ref{nnlem3.4}, we have,
      for $1\leq i_{0} \leq N$, 
      \begin{equation*}
          \left\|e\right\|_{\infty} 
          \leq \frac{\left|R_{i_{0}}\right|}{\sum_{k=1}^{2N-1}a_{i_{0},k}}
          \leq C\frac{\left|R_{i_{0}}\right|}{h_1 \left(x_{i_{0}}-x_{0}\right)^{-\alpha}}
          \leq \begin{cases}
  C N^{r-2},      & {\rm if}~~r>\frac{2}{3},  \\
  C N^{r-2}\ln N, & {\rm if}~~r=\frac{2}{3},\\
  C N^{-2r},      & {\rm if}~~0<r<\frac{2}{3};
\end{cases}
      \end{equation*}
      and, 
      for $N+1\leq i_{0} \leq 2N-1$, 
      \begin{equation*}
          \left\|e\right\|_{\infty} 
          \leq C\frac{\left|R_{i_{0}}\right|}{h_{2N} \left(x_{2N}-x_{i_{0}}\right)^{-\alpha}}
          \leq \begin{cases}
  C N^{r-2},      & {\rm if}~~r>\frac{2}{3}, \\
  C N^{r-2}\ln N, & {\rm if}~~ r=\frac{2}{3},\\
  C N^{-2r},      & {\rm if}~~0<r<\frac{2}{3}.
\end{cases}
      \end{equation*}
      The proof is completed.
\end{proof}

\section{Error analysis: Steady-state problems with low regularity solution}
 In the previous section, we clarified  that an optimal convergence rate emerges from anomalous graded meshes when considering the smooth solution. Conversely, in this section, a sharp error estimate appears in standard graded meshes due to the low regularity of the solution.

 Without loss of generality,  we take $\Omega=\left( 0,2T\right)$ and  rewrite \eqref{equ1.4} as
 \begin{equation}\label{eqn4.1}
	\begin{split}
		\left|\frac{\partial^{\ell} }{\partial x ^{\ell}} u(x)\right|  
		& \le C[x(2T-x)]^{\sigma-\ell}, ~0<\sigma<1 \ \text{ for }  \ell=0,1,2. 
	\end{split}
\end{equation}
Throughout the rest of the paper, we study the local truncation error and the global error under the low  regularity solution \eqref{eqn4.1} for nonlocal diffusion problems.

\subsection{Local truncation error}


By a standard error estimate for linear interpolation, since $u\in C^2(0,2T)$ in \eqref{eqn4.1}, we estimate \eqref{eqn3.4} as
\begin{equation}\label{eqn4.2}
  \left|\mathcal{T}_{i,k}  \right| \leq C h_{k}^{2} \left(\max_{s\in[x_{k-1}, x_{k}]} \left| u_{xx}(s )\right|\right) \int_{x_{k-1}}^{x_{k}}  \left| x_{i}-y \right|^{-\alpha} \,dy,~{\rm for }~k \ne 1,2N.
\end{equation}

\begin{lemma}\label{nnlem4.1}
Let $r>0$  and $0<\sigma <1$. Then  there exists a constant $C$ such that
	\[
	\sum_{k=1}^{i}{|\mathcal{T}_{i,k}|} \le
  \begin{cases}
C  N^{-r \left(1+\sigma -\alpha \right)}  i^{-r\alpha},                          &{\rm if}~~  r\left(1+\sigma\right)<2, \\          
C  N^{-r \left(1+\sigma -\alpha \right)}  i^{-r\alpha} \ln i,                    &{\rm if}~~  r\left(1+\sigma\right)=2, \\          
C  N^{-r \left(1+\sigma -\alpha \right)}  i^{r \left(1+\sigma-\alpha \right)-2}, &{\rm if}~~  r\left(1+\sigma\right)>2
	\end{cases}
	\]
	for all $i\in\{1, \dots, N\}$.
\end{lemma}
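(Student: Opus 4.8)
The plan is to reproduce the region-splitting strategy of Lemma~\ref{nnlem3.2}, but now feeding the boundary-singular bound \eqref{eqn4.1} for $u_{xx}$ into the interpolation estimate \eqref{eqn4.2} in place of a uniform $C^2$ bound. Since every interval $[x_{k-1},x_k]$ with $1\le k\le i\le N$ lies in the left half $(0,T]$, there $[x(2T-x)]^{\sigma-2}\le Cx^{\sigma-2}$ is decreasing, so $\max_{s\in[x_{k-1},x_k]}|u_{xx}(s)|\le Cx_{k-1}^{\sigma-2}$ for $k\ge 2$. Writing $x_{k-1}=T((k-1)/N)^r$ and using $(k-1)\ge k/2$ gives the one genuinely new ingredient, $x_{k-1}^{\sigma-2}\le CN^{r(2-\sigma)}k^{r(\sigma-2)}$, which combines with $h_k\le CN^{-r}k^{r-1}$ from \eqref{eqn3.2}.

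First I would dispose of the boundary interval $k=1$ on its own, because \eqref{eqn4.2} is unavailable there and $u$ is only H\"older-$\sigma$ up to $x=0$. On $[0,x_1]$ one has $u(0)=0$, hence $\Pi_h u(y)=u(x_1)\,y/x_1$; from $|u(x_1)|\le Cx_1^{\sigma}$ together with $y\le x_1$ one checks $|u(y)-\Pi_h u(y)|\le Cy^{\sigma}$. Then $|\mathcal{T}_{1,1}|\le C\int_0^{x_1}y^{\sigma}(x_1-y)^{-\alpha}\,dy\le Cx_1^{1+\sigma-\alpha}=CN^{-r(1+\sigma-\alpha)}$, while for $i>1$ I would factor out $(x_i-x_1)^{-\alpha}\le Cx_i^{-\alpha}$ to get $|\mathcal{T}_{i,1}|\le CN^{-r(1+\sigma-\alpha)}i^{-r\alpha}$. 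This is the baseline term present in all three cases.

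Next I would split $\sum_{k=2}^{i}$ as in Lemma~\ref{nnlem3.2}: the far range $2\le k\le\lceil i/2\rceil$, the near range $\lceil i/2\rceil<k\le i-1$, and the diagonal term $k=i$. In the far range, $x_i-x_k\ge cx_i$ lets me pull out $x_i^{-\alpha}$, and assembling $h_k^3$, $x_{k-1}^{\sigma-2}$ and $x_i^{-\alpha}$ yields exactly $CN^{-r(1+\sigma-\alpha)}i^{-r\alpha}k^{r(1+\sigma)-3}$; summing $k^{r(1+\sigma)-3}$ is precisely what produces the three-way split, since the exponent crosses $-1$ at $r(1+\sigma)=2$, giving $O(1)$, $O(\ln i)$, or $O(i^{r(1+\sigma)-2})$. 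The near range (bounding $h_k\le CN^{-r}i^{r-1}$ and integrating $(x_i-y)^{-\alpha}$ up to $x_{i-1}$) and the diagonal term (where one integrates the weak singularity $\int_{x_{i-1}}^{x_i}(x_i-y)^{-\alpha}\,dy=Ch_i^{1-\alpha}$) are each bounded by $CN^{-r(1+\sigma-\alpha)}i^{r(1+\sigma-\alpha)-2}$; a short power count shows these are absorbed by the far-range bound in every regime, using $r(1+\sigma)\le 2$ in the first two cases and $3-\alpha\ge 2$ for the diagonal.

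The main obstacle is bookkeeping: tracking the powers of $N$, $i$ and $k$ through the three regions so that the $N$-dependence uniformly collapses to the common prefactor $N^{-r(1+\sigma-\alpha)}$, and confirming that the near-range and diagonal contributions are truly dominated by the far-range estimate in each of the three regimes. The one genuinely delicate step is the $k=1$ analysis, where the interpolation-error bound breaks down and one must exploit the vanishing of $u$ at the boundary to recover the sharp $y^{\sigma}$ decay of $u-\Pi_h u$.
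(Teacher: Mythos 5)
Your proposal is correct and follows essentially the same route as the paper's proof: the same decomposition into the boundary interval $k=1$, the far range $2\le k\le\lceil i/2\rceil$, the near range $\lceil i/2\rceil<k<i$, and the diagonal $k=i$, with the identical power counting $h_k^3 x_{k-1}^{\sigma-2}(x_i-x_k)^{-\alpha}\le CN^{-r(1+\sigma-\alpha)}i^{-r\alpha}k^{r(1+\sigma)-3}$ driving the three-way split. The only cosmetic difference is at $k=1$, where you package the estimate as the pointwise bound $|u(y)-\Pi_h u(y)|\le Cy^{\sigma}$ (using $u(0)=0$), whereas the paper bounds $|u|$ and $|\Pi_h u|$ separately by the triangle inequality; both yield the same $Cx_1^{1+\sigma-\alpha}$.
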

\begin{proof}
  Let $i\in\{1, \dots, N\}$ be arbitrary but fixed.
	Consider separately the cases $k=1=i,\ k=1<i, \ 1< k=i$ and  $1<k<i$.
  
From \eqref{eqn3.4} and \eqref{eqn4.1}, it can be computed 
\begin{equation}\label{eqn4.3}
  \begin{split}
     \left|\mathcal{T}_{1,1} \right| 
       &\leq \int_{x_{0}}^{x_{1}} \left|u(y)\right|\left(x_{1}-y\right)^{-\alpha} \, dy
       +\int_{x_{0}}^{x_{1}} \left|\Pi_{h}u(y)\right|\left(x_{1}-y\right)^{-\alpha} \, dy\\
       & \leq C \left[ \int_{x_{0}}^{x_{1}}{y^{\sigma}\left(x_{1}-y \right)^{-\alpha} \, dy} 
       +  \frac{x_{1}^{\sigma}}{h_{1}} \int_{x_0}^{x_1} {y \left(x_{1}-y \right)^{-\alpha}} \,dy \right] \\
       & \leq C x_{1}^{1+\sigma-\alpha} \leq N^{-r \left(1+\sigma-\alpha \right)},
  \end{split}
\end{equation}
and, for $i>1$,
\begin{equation}\label{eqn4.4}
  \begin{split}
     \left| \mathcal{T}_{i,1} \right|
       & \leq C \left(x_{i}-x_{1} \right)^{-\alpha}\int_{x_{0}}^{x_{1}} \left| u \left(y\right) -\Pi_h u\left(y\right) \right| \, dy 
       \\
       & \leq C \left(x_{1}^{\sigma+1} + x_{1}^{\sigma}h_{1}\right)
       \left(N^{-r} i^{r}\right)^{-\alpha} \leq C N^{-r \left(1+\sigma-\alpha\right)} i^{-r\alpha}. 
  \end{split}
\end{equation}

For $1<k \le \left\lceil \frac{i}{2} \right\rceil$, using \eqref{eqn3.2}, \eqref{eqn4.2} and the well-known convergence properties of the series $\sum_{j=2}^\infty j^{ \mu }$ $ (\mu \in\mathbb{R})$ , it yields 
\begin{equation}\label{eqn4.5}
\begin{split}
  \sum_{k=2}^{\left\lceil i/2 \right\rceil}\left|\mathcal{T}_{i,k}\right| 
     & \leq C \sum_{k=2}^{\left\lceil i/2 \right\rceil}  h_{k}^{3}  x_{k-1}^{\sigma-2} \left( x_{i} - x_{k}\right)^{-\alpha} \\
     & \leq C \sum_{k=2}^{\left\lceil i/2 \right\rceil}  \left(N^{-r}k^{r-1}\right)^{3} \left(N^{-r} k^{r}\right)^{\sigma-2}  \left(N^{-r} i^{r} \right)^{-\alpha}\\
     & \leq C N^{-r \left(1+\sigma -\alpha \right)} i^{-r\alpha} \sum_{k=2}^{\left\lceil i/2 \right\rceil} k^{r\left(1+\sigma\right)-3}\\
     &\leq \begin{cases}
C  N^{-r \left(1+\sigma -\alpha \right)}  i^{-r\alpha},
&{\rm if}~~  r\left(1+\sigma\right)<2, \\
C  N^{-r \left(1+\sigma -\alpha \right)}  i^{-r\alpha} \ln i, 
&{\rm if}~~  r\left(1+\sigma\right)=2, \\
C  N^{-r \left(1+\sigma -\alpha \right)}  i^{r \left(1+\sigma-\alpha \right)-2}, 
&{\rm if}~~  r\left(1+\sigma\right)>2.
	\end{cases}
\end{split}
\end{equation}

For $ \left\lceil \frac{i}{2} \right\rceil <k<i $,  using  \eqref{eqn4.2} and \eqref{eqn3.2}, there  exist
\begin{equation}\label{eqn4.6}
  \begin{split}
     \sum_{k=\lceil i/2 \rceil +1}^{i-1}{\left| \mathcal{T}_{i,k} \right|} 
  & \leq C \sum_{k=\lceil i/2 \rceil +1}^{i-1}  h_{k}^{2} x_{k-1}^{\sigma-2} \int_{x_{k-1}}^{x_{k}}{\left(x_{i}-y \right)^{-\alpha} \, dy} \\
  & \leq C \left(N^{-r} i^{r-1} \right)^{2} \left(N^{-r} i^{r}\right)^{\sigma-2}  \int_{x_{\lceil i/2 \rceil}}^{x_{i-1}}{\left(x_{i}-y \right)^{-\alpha} \, dy} \\
  & \leq C \left(N^{-r} i^{r-1} \right)^{2} \left(N^{-r} i^{r}\right)^{\sigma-2}   \left[\left(x_{i}-x_{\lceil i/2 \rceil}\right)^{1-\alpha} -h_{i}^{1-\alpha} \right]  \\
  & \leq C N^{-r\left(1+\sigma-\alpha \right)} i^{r \left(1+\sigma-\alpha\right) - 2},\\
  \end{split}
\end{equation}
and, for $i>1$, 
\begin{equation}\label{eqn4.7}
\begin{split}
   \left|\mathcal{T}_{i,i} \right| 
   & \leq C h_{i}^{2} x_{i-1}^{\sigma-2} \int_{x_{i-1}}^{x_{i}} {\left(x_{i}-y \right)^{-\alpha} \, dy} \\
     & \leq C h_{i}^{3-\alpha} x_{i}^{\sigma-2}\\
     & \leq C N^{-r \left(1+\sigma-\alpha\right)} i^{r \left(1+\sigma-\alpha\right) - \left( 3-\alpha \right)}.
\end{split}
\end{equation}
Combining bounds \eqref{eqn4.3}, \eqref{eqn4.4}, \eqref{eqn4.5},  \eqref{eqn4.6} and \eqref{eqn4.7} 
yields the desired result.
\end{proof}

\begin{lemma}\label{nnlem4.2}
Let $r>0$  and $0<\sigma <1$. Then  there exists a constant $C$ such that
	\[
	\sum_{k=i+1}^N |\mathcal{T}_{i,k}| 
\le \begin{cases}
C  N^{-r \left(1+\sigma -\alpha \right)} i^{-r\alpha},        &{\rm if }~~  r \left(1+\sigma\right)  <2, \\
C  N^{-r \left(1+\sigma -\alpha \right)} i^{-r\alpha}  \ln N, &{\rm if }~~  r \left(1+\sigma\right)  =2, \\
C  N^{-\left(2-r\alpha\right)}           i^{-r\alpha},        &{\rm if }~~  r \left(1+\sigma\right)  >2         
	\end{cases}
	\]
	for all $i\in\{1, \dots, N-1\}$.
\end{lemma}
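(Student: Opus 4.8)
The plan is to mirror the dissection used for $\sum_{k=1}^{i}$ in Lemma \ref{nnlem4.1} (and for the smooth case in Lemmas \ref{nnlem3.2}--\ref{nnlem3.3}), but now working to the right of the collocation point, where $|x_i-y|^{-\alpha}=(y-x_i)^{-\alpha}$ for $y\in[x_{k-1},x_k]$ with $k>i$. Throughout I would use the linear-interpolation estimate \eqref{eqn4.2} together with the derivative bound from \eqref{eqn4.1}, namely $\max_{s\in[x_{k-1},x_k]}|u_{xx}(s)|\le Cx_{k-1}^{\sigma-2}$ (the factor $(2T-s)^{\sigma-2}$ stays bounded for $k\le N$), and the mesh relations $h_k\le CN^{-r}k^{r-1}$, $x_k\sim N^{-r}k^r$ from \eqref{eqn3.2}--\eqref{eqn3.1}. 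First I would peel off the term $k=i+1$, whose subinterval abuts the singularity $x_i$; here $\int_{x_i}^{x_{i+1}}(y-x_i)^{-\alpha}\,dy\le Ch_{i+1}^{1-\alpha}$ gives $|\mathcal{T}_{i,i+1}|\le CN^{-r(1+\sigma-\alpha)}i^{r(1+\sigma-\alpha)-(3-\alpha)}$, which is dominated by the claimed right-hand side in every regime since $r(1+\sigma)\le 2<3-\alpha$.

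Next I would treat the near region $i+2\le k\le J:=\min\{2i,N\}$. On this block $x_{k-1}\sim x_i$ and $h_k\le CN^{-r}i^{r-1}$, so factoring these constants out of the sum leaves $\int_{x_{i+1}}^{x_J}(y-x_i)^{-\alpha}\,dy\le C(x_J-x_i)^{1-\alpha}$. When $J=2i$ this integral is $\sim x_i^{1-\alpha}$ and the block contributes $CN^{-r(1+\sigma-\alpha)}i^{r(1+\sigma-\alpha)-2}$, again dominated because $r(1+\sigma)\le 2$; when $J=N$ (so that $i\sim N$) the integral is only $O(1)$, but the apparent loss is absorbed since $i$ and $N$ are then comparable.

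The decisive block is the far region $2i+1\le k\le N$, which is present only when $2i<N$. The key step, exactly as in Lemma \ref{nnlem3.2}, is to bound the kernel by its value at the near endpoint, $(x_{k-1}-x_i)^{-\alpha}\le(x_{2i}-x_i)^{-\alpha}\le Cx_i^{-\alpha}=CN^{r\alpha}i^{-r\alpha}$, rather than by $x_{k-1}^{-\alpha}$; this isolates the factor $i^{-r\alpha}$ and, after inserting $h_k^{3}x_{k-1}^{\sigma-2}$, yields $|\mathcal{T}_{i,k}|\le CN^{-r(1+\sigma-\alpha)}i^{-r\alpha}k^{r(1+\sigma)-3}$. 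Summing the power $k^{r(1+\sigma)-3}$ over $2i+1\le k\le N$ via the series facts quoted around \eqref{eqn4.5} then produces the three-way dichotomy: the series converges when $r(1+\sigma)<2$, behaves like $\ln N$ when $r(1+\sigma)=2$, and grows like $N^{r(1+\sigma)-2}$ when $r(1+\sigma)>2$. This gives precisely the three stated bounds, once the exponents collapse via $-r(1+\sigma-\alpha)+[r(1+\sigma)-2]=-(2-r\alpha)$.

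The main obstacle I anticipate is bookkeeping rather than genuine analysis: ensuring the constants in the comparisons $x_{k-1}\sim x_i$, $h_k\le CN^{-r}i^{r-1}$ and $x_{k-1}-x_i\ge c\,x_{k-1}$ are uniform across the ranges $r<1$ and $r\ge 1$ (recall $k^{r-1}$ is increasing for $r\ge 1$ but decreasing for $r<1$, which changes which endpoint controls $h_k$), and verifying in each of the cases $r(1+\sigma)<2$, $=2$, $>2$ that the $k=i+1$ and near-region contributions are genuinely dominated by the far-region bound. The degenerate case $2i\ge N$, where the far region is empty and one simply exploits $i\sim N$ to reduce every $i$-power to an $N$-power, must be recorded separately but poses no new difficulty.
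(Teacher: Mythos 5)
Your proposal is correct and follows essentially the same route as the paper's proof: the same three-block decomposition (the abutting term $k=i+1$ as in \eqref{eqn4.8}, the near region $i+2\le k\le\min\{2i,N\}$ as in \eqref{eqn4.9}, and the far region $k>2i$ as in \eqref{eqn4.10}), the same key kernel bound $(x_{k-1}-x_i)^{-\alpha}\le CN^{r\alpha}i^{-r\alpha}$ in the far block, and the same summation of $k^{r(1+\sigma)-3}$ to produce the three-way dichotomy. The domination checks and the degenerate case $2i\ge N$ that you flag are handled implicitly (i.e., equally tersely) in the paper as well.
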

\begin{proof}
 From \eqref{eqn4.2}, \eqref{eqn3.1} and \eqref{eqn3.2}, we can figure out 
\begin{equation}\label{eqn4.8}
\begin{split}
   \left|\mathcal{T}_{i,i+1} \right|
    & \leq C h_{i+1}^{2} x_{i}^{\sigma-2} \int_{x_{i}}^{x_{i+1}} {\left(y - x_{i} \right)^{-\alpha} \, dy} \\
     & \leq C h_{i+1}^{3-\alpha} x_{i}^{\sigma-2}\\
     & \leq C N^{-r \left( 1+\sigma -\alpha\right)} i^{r \left( 1+\sigma -\alpha\right) - \left( 3-\alpha \right)}.
\end{split}
\end{equation}

Taking $K =\min \{2i, N\}$ and using \eqref{eqn4.2}, \eqref{eqn3.2}, it leads to
\begin{equation}\label{eqn4.9}
\begin{split}
   \sum_{k=i+2}^{K}{\left|\mathcal{T}_{i,k} \right|} 
   & \leq C \sum_{k=i+2}^{K} h_{k}^{2} x_{k-1}^{\sigma-2} \int_{x_{k-1}}^{x_{k}} {\left(y-x_{i} \right)^{-\alpha}} \, dy \\
     & \leq C \left(N^{-r}i^{r-1}\right)^{2} \left(N^{-r}i^{r}\right)^{\sigma-2}  \int_{x_{i+1}}^{x_{K}} {\left(y-x_{i} \right)^{-\alpha}} \, dy \\
     & \leq C \left(N^{-r}i^{r-1}\right)^{2} \left(N^{-r}i^{r}\right)^{\sigma-2} \left(x_{K}-x_{i}\right)^{1-\alpha} \\
     & \leq C N^{-r \left( 1+\sigma -\alpha\right)}   i^ {r \left( 1+\sigma -\alpha\right)-2}.
\end{split}
\end{equation}
For the special case $K=2i <N$, from \eqref{eqn4.2}, \eqref{eqn3.1} and \eqref{eqn3.2}, it yields  
\begin{equation}\label{eqn4.10}
    \begin{split}
        \sum_{k=K+1}^{N}\left|\mathcal{T}_{i,k} \right|
     &\leq C \sum_{k=K+1}^{N} {h_{k}^{3}} x_{k}^{\sigma-2}\left(x_{k-1}-x_{i} \right)^{-\alpha} \\
     & \leq C \sum_{k=K+1}^{N} \left( N^{-r} k^{r-1}\right)^{3} \left(N^{-r} k^{r}\right)^{\sigma-2} \left(N^{-r} i^{r} \right)^{-\alpha}\\
     &\leq  C N^{-r\left(1+\sigma-\alpha\right)} i^{-r\alpha} \sum_{k=K+1}^{N}  k^{r\left(1+\sigma\right)-3}\\
     &\le \begin{cases}
C  N^{-r\left(1+\sigma -\alpha \right)} i^{-r\alpha},        &{\rm if }  ~~r \left(1+\sigma\right)<2, \\
C  N^{-r\left(1+\sigma -\alpha \right)} i^{-r\alpha}  \ln N, &{\rm if }  ~~r \left(1+\sigma\right)=2, \\                    
C  N^{-\left(2-r\alpha\right)}          i^{-r\alpha},        &{\rm if }  ~~r \left(1+\sigma\right)>2.                    
	\end{cases}
    \end{split}
\end{equation}
Adding \eqref{eqn4.8}, \eqref{eqn4.9} and \eqref{eqn4.10} gives the desired result.
\end{proof}

\begin{lemma}\label{nnlem4.3}
	Let $r>0$  and $0<\sigma <1$. Then  there exists a constant $C$ such that
	\[
	\sum_{k=N+1}^{2N} |\mathcal{T}_{i,k}| \le
\begin{cases}
C  N^{-r\left(1+\sigma\right)},          &{\rm if }   ~~r\left(1+\sigma\right)<2, \\
C  N^{-r\left(1+\sigma\right)} \ln N,    &{\rm if }   ~~r\left(1+\sigma\right)=2, \\
C  N^{-2},                               &{\rm if }   ~~r\left(1+\sigma\right)>2
	\end{cases}
	\]
	for all $i\in\{1, \dots, N\}$.
\end{lemma}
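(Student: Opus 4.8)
The plan is to reduce this right-half sum to the left-half sum already controlled in Lemma \ref{nnlem4.1}, by combining a monotonicity bound on the kernel with the reflection symmetry of the mesh \eqref{eqn3.1} and of the solution bound \eqref{eqn4.1}. The guiding observation is that for $i\le N$ the collocation point satisfies $x_i\le x_N=T$, whereas every interval $[x_{k-1},x_k]$ with $k\ge N+1$ lies in $[T,2T]$; in particular the singularity of $u_{xx}$ at the far endpoint $2T$ sits at distance at least $T-x_i$ from $x_i$. This is exactly why, unlike the smooth case of Lemma \ref{nnlem3.3}, the resulting estimate is independent of $i$.

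First I would reduce to the worst index $i=N$. For $i\le N$ and $y\in[x_{k-1},x_k]$ with $k\ge N+1$ we have $y\ge x_{k-1}\ge x_N\ge x_i$, hence $|x_i-y|=y-x_i\ge y-x_N\ge 0$ and therefore $(y-x_i)^{-\alpha}\le (y-x_N)^{-\alpha}$. Substituting this into the interpolation estimate \eqref{eqn4.2} for $k\ne 2N$, and into the direct bound on $|u-\Pi_h u|$ used for the boundary interval (the analogue of \eqref{eqn4.3}--\eqref{eqn4.4}) for $k=2N$, bounds $|\mathcal{T}_{i,k}|$ by the same expression with $x_i$ replaced by $x_N$, uniformly in $i$.

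Next I would invoke the reflection $\zeta=2T-y$. From \eqref{eqn3.1} one reads off $2T-x_k=x_{2N-k}$, $h_k=h_{2N-k+1}$ and $2T-x_N=x_N$, while \eqref{eqn4.1} is invariant under $x\mapsto 2T-x$. Writing $j=2N-k+1$, the interval $[x_{k-1},x_k]$ is carried to $[x_{j-1},x_j]$, with $j$ running from $N$ (at $k=N+1$) down to $1$ (at $k=2N$), and $(y-x_N)^{-\alpha}=(x_N-\zeta)^{-\alpha}$ with $\zeta\le x_N$. Thus each reflected term has exactly the form of the bound on $|\mathcal{T}_{N,j}|$ with collocation point $x_N$ and left-half interval $j\le N$, the boundary interval $k=2N$ corresponding to $j=1$ and being controlled as in \eqref{eqn4.4}. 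This gives, uniformly in $i$,
\[
\sum_{k=N+1}^{2N}\left|\mathcal{T}_{i,k}\right|\le C\sum_{j=1}^{N}B_{N,j},
\]
where $B_{N,j}$ denotes the right-hand side of the interpolation bound for $|\mathcal{T}_{N,j}|$. Since $\sum_{j=1}^{N}B_{N,j}$ is bounded exactly as in the proof of Lemma \ref{nnlem4.1} at $i=N$, substituting $i=N$ there (so that $i^{-r\alpha}=N^{-r\alpha}$, $\ln i=\ln N$ and $i^{r(1+\sigma-\alpha)-2}=N^{r(1+\sigma-\alpha)-2}$) collapses the three cases to $CN^{-r(1+\sigma)}$, $CN^{-r(1+\sigma)}\ln N$ and $CN^{-2}$, which is the claim.

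The main difficulty is conceptual rather than computational: one must see that the correct comparison distance is $y-x_N$ (equivalently $x_N-\zeta$ after reflection), not the distance to the general point $x_i$. Reflecting naively and treating the image as collocation at $x_i$ would misrepresent the geometry, since the effective collocation point $2T-x_i\ge T$ actually lies to the right of all left-half intervals; only by exploiting that the $2T$-singularity is bounded away from every left-half collocation point does one recover both the $i$-uniformity and the full rate $N^{-r(1+\sigma)}$ rather than the weaker $N^{-r(1+\sigma-\alpha)}$. Getting this reduction right, together with the separate treatment of the boundary interval $k=2N$, is the crux.
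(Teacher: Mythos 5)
Your proof is correct, but it takes a genuinely different route from the paper. The paper proves Lemma \ref{nnlem4.3} by a direct four-piece decomposition of the right half: the single interval $k=N+1$ (bounded by $Ch_{N+1}^{3-\alpha}=CN^{-(3-\alpha)}$ in \eqref{eqn4.11}), the quasi-uniform middle region $N+2\le k\le\lceil 3N/2\rceil$ where $u_{xx}$ is bounded and the whole block contributes $CN^{-2}$ in \eqref{eqn4.12}, the near-boundary block $\lceil 3N/2\rceil<k\le 2N-1$ where the kernel is bounded below in distance so the sum reduces to $\sum_k h_k^3(2T-x_k)^{\sigma-2}$ and, after reindexing $q\approx 2N-k$, produces the three cases via $\sum_q q^{r(1+\sigma)-3}$ in \eqref{eqn4.13}, and finally the last interval $k=2N$ treated by the direct bound $|\mathcal{T}_{i,2N}|\le Ch_{2N}^{1+\sigma}$ in \eqref{eqn4.14}. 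You instead combine the monotonicity bound $(y-x_i)^{-\alpha}\le(y-x_N)^{-\alpha}$ with the reflection $\zeta=2T-y$, using the symmetry of the mesh \eqref{eqn3.1} (so $2T-x_k=x_{2N-k}$, $h_k=h_{2N-k+1}$, and interpolation commutes with reflection) and of the regularity bound \eqref{eqn4.1}, to rewrite the whole right-half sum as $\sum_{j=1}^N|\widetilde{\mathcal{T}}_{N,j}|$ for the reflected function $\tilde u(\zeta)=u(2T-\zeta)$; since $\tilde u$ satisfies \eqref{eqn4.1}, the proof of Lemma \ref{nnlem4.1} applies verbatim at $i=N$, and the factor $i^{-r\alpha}\big|_{i=N}=N^{-r\alpha}$ collapses the three cases to exactly $N^{-r(1+\sigma)}$, $N^{-r(1+\sigma)}\ln N$, $N^{-2}$. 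Your reduction is more economical (no computation is repeated), automatically yields the $i$-uniformity, and explains structurally why the rate improves from $N^{-r(1+\sigma-\alpha)}i^{-r\alpha}$ to $N^{-r(1+\sigma)}$; it is also in the spirit of the paper's own proof of the smooth-solution analogue, Lemma \ref{nnlem3.3}, which likewise reflects onto the left half (though keeping $x_i$ rather than first passing to $x_N$, which is your cleaner move). The paper's direct computation, by contrast, is self-contained and does not require noting that Lemma \ref{nnlem4.1}, though stated for the solution $u$, really holds for any function satisfying \eqref{eqn4.1} --- a point you should make explicit (together with the one-line check that $\Pi_h u(2T-\zeta)=\Pi_h\tilde u(\zeta)$) if you write this up, but which is immediate and not a gap.
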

\begin{proof}

From \eqref{eqn4.2} and \eqref{eqn3.2}, we have
\begin{equation}\label{eqn4.11}
\begin{split}
    \left| \mathcal{T}_{i,N+1} \right| 
    & \leq C h_{N+1}^{2} \left( 2T- x_{N+1}\right)^{\sigma-2} \int_{x_{N}}^{x_{N+1}}\left(y -x_{i} \right)^{-\alpha} \, dy \\
     & \leq C h_{N+1}^{3-\alpha}= CN^{-\left( 3-\alpha\right) },
\end{split}
\end{equation}
since 
\[\int_{x_{N}}^{x_{N+1}} \left(y -x_{i} \right)^{-\alpha}\,dy  = \frac{1}{1-\alpha} \,h_{N+1}^{1-\alpha}~~{\rm for}~~i=N,\]
  and 
  \[\int_{x_{N}}^{x_{N+1}} \left(y -x_{i} \right)^{-\alpha}\,dy  
  \leq h_{N+1}\left(x_{N} -x_{i} \right)^{-\alpha} 
  \leq  h_{N+1}^{1-\alpha}~~{\rm for}~~i<N. \]

Furthermore, we can derive that, from \eqref{eqn4.2}, \eqref{eqn3.1} and \eqref{eqn3.2},
\begin{equation}\label{eqn4.12}
  \begin{split}
     \sum_{k=N+2}^{\left\lceil 3N/2 \right\rceil}    \left|\mathcal{T}_{i,k} \right|
      & \leq C  \sum_{k=N+2}^{\left\lceil 3N/2 \right\rceil} h_{k}^{2} \left(2T-x_{k} \right)^{\sigma-2} 
      \int_{x_{k-1}}^{x_{k}} \left(y-x_{i} \right)^{-\alpha} \,dy \\
       & \leq C N^{-2} \int_{x_{N+1}}^{x_{\left\lceil 3N/2 \right\rceil}} \left(y-x_{i} \right)^{-\alpha} \, dy \\
       & \leq C N^{-2}.
  \end{split}
\end{equation}

According to   \eqref{eqn3.1}, \eqref{eqn3.2} and \eqref{eqn4.2}, there exists
\begin{align}\label{eqn4.13}
\sum_{k=\left\lceil 3N/2 \right\rceil+1}^{2N-1}\left|\mathcal{T}_{i,k} \right| 
& \leq C  \sum_{k=\left\lceil 3N/2 \right\rceil+1}^{2N-1} h_{k}^{2} \left(2 T -x_{k} \right)^{\sigma-2}  \int_{x_{k-1}}^{x_{k}} \left(y-x_{i} \right)^{-\alpha} \,dy  \notag\\
       & \leq C \sum_{k=\left\lceil 3N/2 \right\rceil+1}^{2N-1} h_{k}^{3} \left( 2 T- x_{k}\right)^{\sigma-2} \notag\\
       & \leq C \sum_{k=\left\lceil 3N/2 \right\rceil+1}^{2N-1} \left(N^{-r} \left(2N+1-k\right)^{r-1} \right)^{3} \left( N^{-r} \left(2N-k\right)^{r} \right)^{\sigma-2}\notag \\
      & \leq C N^{-r\left(1+\sigma\right)}  \sum_{q=2}^{\left\lceil N/2 \right\rceil }  q^{r\left(1+\sigma\right)-3} \notag \\
       & \le \begin{cases}
C  N^{-r\left(1+\sigma\right)},          &{\rm if}~~  r\left(1+\sigma\right)<2, \\
C  N^{-r\left(1+\sigma\right)} \ln N,    &{\rm if}~~  r\left(1+\sigma\right)=2, \\
C  N^{-2},                               &{\rm if}~~  r\left(1+\sigma\right)>2.
	\end{cases}
\end{align}
By \eqref{eqn4.2} and \eqref{eqn3.2}, it yields 
\begin{equation}\label{eqn4.14}
\begin{split}
   \left|\mathcal{T}_{i,2N} \right| &\leq \int_{x_{2N-1}}^{x_{2N}}\frac{\left|u \left(y \right)- \Pi_h u \left( y\right) \right|}{\left( y-x_{i}\right)^{\alpha}} \, dy   \\
     & \leq C \int_{x_{2N-1}}^{x_{2N}} \left(2T-y \right)^{\sigma} \, dy
     + \frac{\left| u\left(x_{2N-1} \right)\right|}{h_{2N}} \int_{x_{2N-1}}^{x_{2N}} {\left(x_{2N}-y\right) \, dy} \\
     & \leq C h_{2N}^{1+\sigma} \leq C N^{-r\left(1+\sigma\right)}.
\end{split}
\end{equation}
 Adding \eqref{eqn4.11}, \eqref{eqn4.12}, \eqref{eqn4.13} and  \eqref{eqn4.14} gives the desired result.
\end{proof}

\begin{lemma}\label{nnlem4.4}
Let $r>0$  and $0<\sigma <1$. Then  there exists a constant $C$ such that
\begin{equation*}
  |R_{i}| 
  \leq  \begin{cases}
C  N^{-r \left(1+\sigma -\alpha \right)} i^{-r\alpha},        &{\rm if}~~   r \left(1+\sigma\right) <2, \\
C  N^{-r \left(1+\sigma -\alpha \right)} i^{-r\alpha}  \ln N, &{\rm if}~~   r \left(1+\sigma\right) =2, \\
C  N^{-\left(2-r\alpha\right)}           i^{-r\alpha},        &{\rm if}~~   r \left(1+\sigma\right) >2
	\end{cases}
\end{equation*}
for $i\in{1,2,\cdots,N}$, 
and
\begin{equation*}
  |R_{i}| 
  \leq  \begin{cases}
C  N^{-r \left(1+\sigma -\alpha \right)} \left(2N-i\right)^{-r\alpha},        &{\rm if}~~   r \left(1+\sigma\right) <2, \\
C  N^{-r \left(1+\sigma -\alpha \right)} \left(2N-i\right)^{-r\alpha}  \ln N, &{\rm if}~~   r \left(1+\sigma\right) =2, \\
C  N^{-\left(2-r\alpha\right)}           \left(2N-i\right)^{-r\alpha},        &{\rm if}~~   r \left(1+\sigma\right) >2
	\end{cases}
\end{equation*} 
for $i\in{N+1,N+2,\cdots,2N}$.
\end{lemma}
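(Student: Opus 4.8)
The plan is to read off the first estimate directly from the three preceding lemmas and then obtain the second one from the first by a reflection argument. I would begin from the decomposition \eqref{eqn3.3}, writing, for $i\in\{1,\dots,N\}$,
\[
|R_i|\le\sum_{k=1}^{i}|\mathcal{T}_{i,k}|+\sum_{k=i+1}^{N}|\mathcal{T}_{i,k}|+\sum_{k=N+1}^{2N}|\mathcal{T}_{i,k}|,
\]
and bounding the three groups by Lemmas \ref{nnlem4.1}, \ref{nnlem4.2} and \ref{nnlem4.3}, respectively (the middle sum is empty when $i=N$). It then remains to verify, in each of the three regimes $r(1+\sigma)<2$, $r(1+\sigma)=2$ and $r(1+\sigma)>2$, that every contribution is absorbed into the asserted right-hand side.

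The only tool needed for this absorption is the monotonicity consequence of $1\le i\le N$, namely $N^{-r\alpha}\le i^{-r\alpha}$ and, more generally, $(i/N)^{\mu}\le 1$ for $\mu\ge 0$. For example, in the subcritical case $r(1+\sigma)<2$ the first two groups already equal the target $N^{-r(1+\sigma-\alpha)}i^{-r\alpha}$, while the third obeys $N^{-r(1+\sigma)}=N^{-r(1+\sigma-\alpha)}N^{-r\alpha}\le N^{-r(1+\sigma-\alpha)}i^{-r\alpha}$; the critical case is identical once $\ln i$ is enlarged to $\ln N$. In the supercritical case $r(1+\sigma)>2$ the target is $N^{-(2-r\alpha)}i^{-r\alpha}$, the second group matches it, the term $N^{-2}$ coming from the third group is controlled via $N^{-r\alpha}\le i^{-r\alpha}$, and the first group is controlled because
\[
N^{-r(1+\sigma-\alpha)}i^{r(1+\sigma-\alpha)-2}=N^{-(2-r\alpha)}i^{-r\alpha}\,(i/N)^{r(1+\sigma)-2}\le N^{-(2-r\alpha)}i^{-r\alpha},
\]
the exponent $r(1+\sigma)-2$ being positive. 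This establishes the first estimate.

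For $i\in\{N+1,\dots,2N\}$ I would argue by symmetry. The partition \eqref{eqn3.1} and the regularity bound \eqref{eqn4.1} are both invariant under the reflection $x\mapsto 2T-x$, which carries the node $x_i$ to $x_{2N-i}$ and interchanges the endpoints $0$ and $2T$. Substituting $\zeta=2T-y$ in \eqref{nequ2.4} and using that linear interpolation on a symmetric mesh commutes with this reflection, one identifies $|R_i|$ with the truncation error of the reflected solution $\tilde u(x):=u(2T-x)$ at the node indexed by $2N-i\in\{0,\dots,N-1\}$. Since $\tilde u$ satisfies the same hypotheses \eqref{eqn4.1}, applying the first estimate with $i$ replaced by $2N-i$ yields exactly the claimed bound.

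The proof is thus largely bookkeeping, and I expect no serious obstacle. The one place demanding attention is the regime-by-regime absorption of the auxiliary terms, where the dominance inequalities must be shown to hold uniformly in $i$, which is precisely the role of the constraint $i\le N$. Some care is also needed to confirm that the reflection genuinely preserves both the interpolation operator and the mesh grading, so that the first estimate transfers verbatim to the right half of the domain.
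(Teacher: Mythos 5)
Your proof is correct. For the first half ($1\le i\le N$) it coincides with the paper's route: the paper simply declares the bound ``an immediate consequence'' of Lemmas \ref{nnlem4.1}--\ref{nnlem4.3}, so your regime-by-regime absorption --- using $i\le N$ to get $N^{-r\alpha}\le i^{-r\alpha}$ and $\ln i\le \ln N$, and $(i/N)^{r(1+\sigma)-2}\le 1$ to absorb the Lemma \ref{nnlem4.1} term in the supercritical case --- is exactly the bookkeeping the authors left implicit, and each dominance inequality checks out (including the empty middle sum at $i=N$). Where you genuinely diverge is the second half: the paper disposes of $i\in\{N+1,\dots,2N\}$ by asserting that ``similar arguments can be performed'' as in Lemmas \ref{nnlem4.1}--\ref{nnlem4.3}, i.e.\ by re-running the whole mirrored truncation-error analysis on the right half of the interval (and omitting it), whereas you transfer the already-proved left-half estimate via the reflection $x\mapsto 2T-x$. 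Your route is tidier and actually completes a step the paper only sketches: the mesh \eqref{eqn3.1} satisfies $2T-x_j=x_{2N-j}$, hence $\phi_k(2T-x)=\phi_{2N-k}(x)$ and piecewise-linear interpolation commutes with the reflection; the substitution $\zeta=2T-y$ in \eqref{nequ2.4} then yields $|R_i|=|\tilde R_{2N-i}|$, where $\tilde R$ denotes the truncation error of $\tilde u(x):=u(2T-x)$, and since $\tilde u$ obeys \eqref{eqn4.1} with the same constant, the first estimate applies verbatim at index $2N-i$. This reflection device is consistent with the paper's own toolkit (the same substitution appears in the proof of Lemma \ref{nnlem3.3}). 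One cosmetic remark: for $i=2N$ the index $2N-i=0$ falls outside the range of the first part, but that case is vacuous since the collocation points are $i=1,\dots,2N-1$.
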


\begin{proof}
For $i=1,2\dots, N$, this result is an immediate consequence by Lemmas~\ref{nnlem4.1}--\ref{nnlem4.3}.
The similar arguments can be performed as Lemmas~\ref{nnlem4.1}--\ref{nnlem4.3} for $i\in{N+1,N+2,\cdots,2N}$,  we omit it here.
\end{proof}

\subsection{Convergence analysis}
We start with the global error analysis for the  steady-state problem \eqref{equ1.2} under the low regularity solution.
\begin{theorem}\label{nnthm4.5}
Let $r>0$  and $0<\sigma <1$. Then  there exist  constants $C$ such that
  \begin{equation*}
    \max_{1\leq i \leq 2N-1} \left|u\left(x_{i}\right)-u_{i}\right| 
\leq  \begin{cases}
C  N^{r-2} ,              &{\rm if}~~  r\left( 1+\sigma\right) >2,\\
C  N^{r-2} \ln N,         &{\rm if}~~  r\left( 1+\sigma\right) =2, \\
C  N^{-r \sigma},         &{\rm if}~~  r\left( 1+\sigma\right) <2. \\
	\end{cases}
  \end{equation*}
\end{theorem}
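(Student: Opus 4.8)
The plan is to bound the global error $\|e\|_\infty$ by combining the diagonal-dominance machinery already established with the sharp local truncation bounds from Lemma~\ref{nnlem4.4}. Recall from \eqref{eqn3.6} that for the index $i_0$ achieving the maximum error,
\begin{equation*}
\|e\|_\infty \leq \frac{|R_{i_0}|}{\sum_{k=1}^{2N-1} a_{i_0,k}},
\end{equation*}
so the entire theorem reduces to controlling the ratio of the truncation error to the row sum. The numerator is supplied directly by Lemma~\ref{nnlem4.4}, which gives $|R_{i_0}|$ in terms of $i_0^{-r\alpha}$ (or $(2N-i_0)^{-r\alpha}$ in the right half) together with the three-case $N$-dependence governed by the threshold $r(1+\sigma)=2$. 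The denominator is supplied by the lower bound \eqref{nequ2.9}, namely $\sum_k a_{i_0,k} \geq \tfrac12[h_1(x_{i_0}-x_0)^{-\alpha} + h_{2N}(x_{2N}-x_{i_0})^{-\alpha}]$.

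First I would treat the left half $1\leq i_0\leq N$. Here the dominant term in the row-sum lower bound is $h_1(x_{i_0})^{-\alpha}$, and using the mesh formula \eqref{eqn3.1} together with \eqref{eqn3.2} one has $h_1 \sim N^{-r}$ and $x_{i_0}^{-\alpha}\sim (N^{-r}i_0^{r})^{-\alpha}=N^{r\alpha}i_0^{-r\alpha}$, so the denominator scales like $N^{-r(1-\alpha)}i_0^{-r\alpha}$. Dividing the Lemma~\ref{nnlem4.4} bound for $|R_{i_0}|$ by this quantity, the crucial observation is that the factor $i_0^{-r\alpha}$ appearing in \emph{both} numerator and denominator cancels exactly, leaving a bound that depends only on $N$. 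In the regime $r(1+\sigma)>2$ this yields $N^{-r(1+\sigma-\alpha)}/N^{-r(1-\alpha)}=N^{-r\sigma}$ --- wait, one must be careful: in that regime the numerator of Lemma~\ref{nnlem4.4} is $N^{-(2-r\alpha)}i_0^{-r\alpha}$, and dividing gives $N^{-(2-r\alpha)}/N^{-r(1-\alpha)}=N^{r-2}$, matching the claimed first case. In the regime $r(1+\sigma)<2$ the numerator is $N^{-r(1+\sigma-\alpha)}i_0^{-r\alpha}$ and division produces $N^{-r\sigma}$, matching the third case; the borderline $r(1+\sigma)=2$ simply carries the extra $\ln N$.

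The main obstacle, and the step requiring genuine care rather than routine algebra, is verifying that the $i_0$-dependence cancels \emph{uniformly} in $i_0$, so that the resulting bound is truly independent of which collocation point realizes the maximum. One must confirm that the power of $i_0$ in the numerator never exceeds what the denominator can absorb --- in particular, checking that in every case the exponent of $i_0$ in $|R_{i_0}|$ is at most $-r\alpha$, which is exactly the form Lemma~\ref{nnlem4.4} delivers, so no residual positive power of $i_0$ survives to spoil the uniform bound. The symmetric right-half case $N+1\leq i_0\leq 2N-1$ follows identically using the $h_{2N}(x_{2N}-x_{i_0})^{-\alpha}$ term and the $(2N-i_0)^{-r\alpha}$ factor from the second displayed bound of Lemma~\ref{nnlem4.4}, so I would simply remark that the argument is mirror-symmetric and take the maximum over the two halves to conclude.
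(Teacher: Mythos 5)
Your proposal is correct and follows essentially the same route as the paper's own proof: the reduction via \eqref{eqn3.6}, the row-sum lower bound \eqref{nequ2.9}, the truncation bounds of Lemma~\ref{nnlem4.4}, the cancellation of the $i_0^{-r\alpha}$ factor between numerator and denominator, and the mirror-symmetric treatment of the right half are exactly the paper's argument. The case algebra you carried out (including the correction to $N^{-(2-r\alpha)}/N^{-r(1-\alpha)} = N^{r-2}$ when $r(1+\sigma)>2$, and the identity $N^{-r\sigma}=N^{r-2}$ on the borderline) matches the stated bounds.
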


\begin{proof}
From  \eqref{nequ2.9},  \eqref{eqn3.6} and Lemma \ref{nnlem4.4}, we obtain,
 for $1\leq i_{0}\leq N$,
  \begin{equation*}
  \begin{split}
      \left\|e\right\|_{\infty} 
        \leq C\frac{\left|R_{i_{0}}\right|}{h_1 \left(x_{i_{0}}-x_{0}\right)^{-\alpha}}
        \leq 
        \begin{cases}
C  N^{r-2} ,              &{\rm if}~~     r\left( 1+\sigma\right) >2,\\
C  N^{r-2} \ln N,         &{\rm if}~~     r\left( 1+\sigma\right) =2, \\
C  N^{-r \sigma},         &{\rm if}~~     r\left( 1+\sigma\right) <2, \\
	\end{cases}
  \end{split}
      \end{equation*}
   and,   for $N+1\leq i_{0} \leq 2N-1$, 
  \begin{equation*}
  \begin{split}
      \left\|e\right\|_{\infty} 
      \leq C\frac{\left|R_{i_{0}}\right|}{h_{2N} \left(x_{2N}-x_{i_{0}}\right)^{-\alpha}}
        \leq 
        \begin{cases}
C  N^{r-2} ,             &{\rm if}~~     r\left(1+\sigma\right) >2,\\
C  N^{r-2} \ln N,        &{\rm if}~~     r\left(1+\sigma\right) =2, \\
C  N^{-r \sigma},        &{\rm if}~~     r\left(1+\sigma\right) <2. \\
	\end{cases}
  \end{split}
      \end{equation*}
    The proof is completed. 
\end{proof}

\section{Error analysis: Time-dependent case with low regularity solution}
An optimal error estimate with far less than first-order accuracy is demonstrated in section 4
for the steady-state problems. Now, we further study time-dependent case, however, 
second-order convergence can be achieved on graded meshes. 

\subsection{Local truncation error}
Before we start to discuss the time-dependent nonlocal problem \eqref{equ1.1}
we shall briefly review and revise the local truncation error for the corresponding stationary problem \eqref{equ1.2}.
\begin{lemma}\label{nnlem5.1}
Let $r>0$  and $0<\sigma <1$.   Then  there exists a constant $C$ such that
    \begin{equation*}
        \left|R_{i}\right|  \leq \begin{cases}
C  N^{-r\left(1+\sigma -\alpha \right)},      &{\rm if }  ~~r \left(1+\sigma- \alpha\right)<2, \\
C  N^{-r\left(1+\sigma -\alpha \right)}\ln N, &{\rm if }  ~~r \left(1+\sigma- \alpha\right)=2, \\                    
C  N^{-2},                                    &{\rm if }  ~~r \left(1+\sigma- \alpha\right)>2                
	\end{cases}
    \end{equation*}
    for $i=1,2,\cdots,2N-1$.
\end{lemma}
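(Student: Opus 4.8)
The plan is to promote the $i$-dependent bounds of Lemmas \ref{nnlem4.1}--\ref{nnlem4.3} to a bound that is \emph{uniform} in $i$, by taking the maximum over $i$ in the decomposition $R_i=\sum_{k=1}^{2N}\mathcal{T}_{i,k}$ from \eqref{eqn3.3}. The organizing observation is that the claimed trichotomy is nothing but $\max\{N^{-r(1+\sigma-\alpha)},\,N^{-2}\}$, with a logarithm inserted at the crossover $r(1+\sigma-\alpha)=2$: the scale $N^{-r(1+\sigma-\alpha)}$ is produced in the boundary layer (smallest $i$), while $N^{-2}$ is the interior rate (largest $i$). For $1\le i\le N$ I would split the left half into a near field $\sum_{k=1}^{2i}$ and a far tail $\sum_{k=2i+1}^{N}$ (vacuous when $2i\ge N$), retain the cross-boundary block $\sum_{k=N+1}^{2N}$ from Lemma \ref{nnlem4.3}, and treat the reflected range $N+1\le i\le 2N-1$ at the end by symmetry.

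For the near field nothing new is needed. Lemma \ref{nnlem4.1} bounds $\sum_{k=1}^{i}|\mathcal{T}_{i,k}|$ by $N^{-r(1+\sigma-\alpha)}$ times one of $i^{-r\alpha}$, $i^{-r\alpha}\ln i$, or $i^{\,r(1+\sigma-\alpha)-2}$ according to the sign of $r(1+\sigma)-2$, and \eqref{eqn4.8}--\eqref{eqn4.9} furnish the same three shapes for $\sum_{k=i+1}^{2i}$. Maximizing over $1\le i\le N$ is elementary: $i^{-r\alpha}$ and $i^{-r\alpha}\ln i$ are bounded by a constant, whereas the monotone factor $i^{\,r(1+\sigma-\alpha)-2}$ contributes $1$ at $i=1$ when $r(1+\sigma-\alpha)<2$ and $N^{\,r(1+\sigma-\alpha)-2}$ at $i=N$ when $r(1+\sigma-\alpha)>2$. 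In every case the near field is at most $\max\{N^{-r(1+\sigma-\alpha)},\,N^{-2}\}$, exactly the target.

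The far tail is the only place where the estimates of the previous section are not sharp enough, and it is the step I expect to be the real obstacle. In Lemma \ref{nnlem4.2} this block was handled via the crude bound $(x_{k-1}-x_i)^{-\alpha}\le(N^{-r}i^{r})^{-\alpha}=CN^{r\alpha}i^{-r\alpha}$, which throws away the decay of the kernel in $k$ and thereby manufactures both a spurious $\ln N$ at $r(1+\sigma)=2$ and the oversized factor $N^{-(2-r\alpha)}$ once $r(1+\sigma)>2$; after maximizing over $i$ neither collapses to the asserted bound. The remedy is to keep that decay: for $k\ge 2i$ one has $x_{k-1}-x_i\ge c\,x_{k-1}$, so $(x_{k-1}-x_i)^{-\alpha}\le Cx_{k-1}^{-\alpha}\le CN^{r\alpha}k^{-r\alpha}$, and feeding this together with $h_k\le CN^{-r}k^{r-1}$ and $x_{k-1}^{\sigma-2}\le C(N^{-r}k^{r})^{\sigma-2}$ into \eqref{eqn4.2} gives $|\mathcal{T}_{i,k}|\le CN^{-r(1+\sigma-\alpha)}k^{\,r(1+\sigma-\alpha)-3}$, now \emph{independent of $i$}. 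The tail is then controlled by $\sum_{k}k^{\,r(1+\sigma-\alpha)-3}$, whose behavior pivots on $r(1+\sigma-\alpha)=2$: it is $O(1)$ below, $O(\ln N)$ at equality, and $O(N^{\,r(1+\sigma-\alpha)-2})$ above, so multiplication by $N^{-r(1+\sigma-\alpha)}$ reproduces $N^{-r(1+\sigma-\alpha)}$, $N^{-2}\ln N$, and $N^{-2}$ respectively---the sharp threshold with the logarithm confined to the crossover.

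It remains to assemble the pieces. By Lemma \ref{nnlem4.3} the cross-boundary block is bounded (up to a logarithm) by $N^{-r(1+\sigma)}$ or $N^{-2}$; since $N^{-r(1+\sigma)}\le N^{-r(1+\sigma-\alpha)}$ and the logarithm is absorbed by the gap $N^{r\alpha}$, it is always dominated by the near-field scale. Adding the near field, the sharpened far tail, and this block yields the stated bound for $1\le i\le N$. The range $N+1\le i\le 2N-1$ follows from the reflection $y\mapsto 2T-y$, under which both the mesh \eqref{eqn3.1} and the regularity estimate \eqref{eqn4.1} are invariant, interchanging the roles of $i$ and $2N-i$ and producing the identical uniform bound. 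Thus the only genuinely new ingredient beyond the previous section is the $i$-independent far-tail estimate; the rest is bookkeeping of the two competing powers of $N$.
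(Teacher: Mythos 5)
Your proposal is correct and follows essentially the same route as the paper: the paper likewise reuses Lemma \ref{nnlem4.1} and \eqref{eqn4.8}--\eqref{eqn4.9} for the near field $k\le K=\min\{2i,N\}$, absorbs the cross-boundary block of Lemma \ref{nnlem4.3} by the gap $N^{-r\alpha}$, handles $N+1\le i\le 2N-1$ by symmetry, and—exactly as you identify—re-estimates the tail $\sum_{k=K+1}^{N}$ by replacing the crude factor $\left(N^{-r}i^{r}\right)^{-\alpha}$ of \eqref{eqn4.10} with $\left(N^{-r}k^{r}\right)^{-\alpha}$, yielding the $i$-independent bound $CN^{-r(1+\sigma-\alpha)}\sum_{k}k^{r(1+\sigma-\alpha)-3}$ and hence the sharp threshold at $r(1+\sigma-\alpha)=2$. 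Your diagnosis that this tail estimate is the only genuinely new ingredient matches the paper's proof precisely.
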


\begin{proof}
We only consider $1\leq i\leq N$; the case of $N+1 \leq i \leq 2N-1$ can be proved  in the same way.

   From Lemma \ref{nnlem4.1}, it is clear that
    \begin{equation}\label{eqn5.1}
        \sum_{k=1}^{i} \left|\mathcal{T}_{i,k}\right| \leq
        \begin{cases}
           C N^{-r\left(1+\sigma-\alpha\right)},  &{\rm if}~~ r\left(1+\sigma-\alpha\right) \leq 2,\\
           C N^{-2},                              &{\rm if}~~ r\left(1+\sigma-\alpha\right) > 2.
        \end{cases}
    \end{equation}

Taking $K=\min \left\{2i,N\right\}$, from \eqref{eqn4.8} and \eqref{eqn4.9}, we can get
\begin{equation*}
    \sum_{k=i+1}^{K} \left|\mathcal{T}_{i,k}\right|
    \leq \begin{cases}
           C N^{-r\left(1+\sigma-\alpha\right)},  &{\rm if}~~ r\left(1+\sigma-\alpha\right) \leq 2,\\
           C N^{-2},                              &{\rm if}~~ r\left(1+\sigma-\alpha\right) > 2.
        \end{cases}
\end{equation*}
In particular, for $K=2i <N$,  by \eqref{eqn4.10}, it yields 
    \begin{equation*}
    \begin{split}
        \sum_{k=K+1}^{N}\left|\mathcal{T}_{i,k} \right|
     & \leq C \sum_{k=K+1}^{N} \left( N^{-r} k^{r-1}\right)^{3} \left(N^{-r} k^{r}\right)^{\sigma-2} \left(N^{-r} k^{r} \right)^{-\alpha}\\
     &\leq  C N^{-r\left(1+\sigma-\alpha\right)}  \sum_{k=K+1}^{N}  k^{r\left(1+\sigma- \alpha\right)-3}\\
     &\le \begin{cases}
C  N^{-r\left(1+\sigma -\alpha \right)} ,     &{\rm if }  ~~r \left(1+\sigma- \alpha\right)<2, \\
C  N^{-r\left(1+\sigma -\alpha \right)}\ln N, &{\rm if }  ~~r \left(1+\sigma- \alpha\right)=2, \\                    
C  N^{-2},                                    &{\rm if }  ~~r \left(1+\sigma- \alpha\right)>2.                    
	\end{cases}
    \end{split}
\end{equation*}
Then we can conclude the following results 
\begin{equation}\label{equa5.3}
\sum_{k=i+1}^{N} \left|\mathcal{T}_{i,k} \right| 
\leq \begin{cases}
C  N^{-r\left(1+\sigma -\alpha \right)},      &{\rm if }  ~~r \left(1+\sigma- \alpha\right)<2, \\
C  N^{-r\left(1+\sigma -\alpha \right)}\ln N, &{\rm if }  ~~r \left(1+\sigma- \alpha\right)=2, \\                    
C  N^{-2},                                    &{\rm if }  ~~r \left(1+\sigma- \alpha\right)>2.                    
	\end{cases}
\end{equation}

From Lemma \ref{nnlem4.3}, we can check that 
\begin{equation}\label{eqn5.2}
    \sum_{k=N+1}^{2N} \left|\mathcal{T}_{i,k}\right|     
  \leq \begin{cases}
           C N^{-r\left(1+\sigma-\alpha\right)},  &{\rm if}~~ r\left(1+\sigma-\alpha\right) \leq 2,\\
           C N^{-2},                              &{\rm if}~~ r\left(1+\sigma-\alpha\right) > 2,
        \end{cases}
\end{equation}
since, for $r\left(1+\sigma\right) \leq 2$, there exist 
$N^{-r\left(1+\sigma\right)}\leq N^{-r\left(1+\sigma\right)}\ln N \leq C N^{-r\left(1+\sigma-\alpha\right)}$ 
and $r\left(1+\sigma-\alpha\right) < 2$;
moreover, $N^{-2}\leq N^{-r\left(1+\sigma-\alpha\right)}$ for $r\left(1+\sigma-\alpha\right) \leq 2$.

From \eqref{eqn5.1}, \eqref{equa5.3} and \eqref{eqn5.2}, we  obtain the desired results. 
\end{proof}

\subsection{Convergence and stability  analysis}
We mainly focus on   the convergence analysis for the time-dependent nonlocal problem \eqref{equ1.1}, since the stability  can be easily obtained by Lax's equivalence theorem \cite[p.\,189]{Lev:07}. 

\begin{theorem}\label{nnthm5.2}
Let $u_{i}^{k}$ be the approximate solution of $u(x_{i},t_{k})$ computed by the discretization scheme \eqref{nequ2.6} . Then
\[
\max_{\substack{1\leq i \leq 2N-1\\1\leq k \leq M}} |u\left(x_{i},t_{k}\right)-u_{i}^{k}| 
\leq \begin{cases}
C  \left(N^{-r\left(1+\sigma -\alpha \right)} +M^{-2}\right),     &{\rm if }  ~~r \left(1+\sigma- \alpha\right)<2, \\
C  \left(N^{-r\left(1+\sigma -\alpha \right)}\ln N+M^{-2}\right), &{\rm if }  ~~r \left(1+\sigma- \alpha\right)=2, \\                 
C  \left(N^{-2}+M^{-2}\right),                                    &{\rm if }  ~~r \left(1+\sigma- \alpha\right)>2.                    
	\end{cases}
\]
\end{theorem}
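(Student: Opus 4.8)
The plan is to follow the standard consistency-plus-stability route for the Crank--Nicolson collocation scheme \eqref{nequ2.6}, carried out entirely in the discrete maximum ($\infty$-)norm. Working in $\ell^2$ is not an option here: the remark following Table \ref{table2.1} shows that the symmetric part of $A$ may be indefinite, so the usual energy/Cayley-transform argument for Crank--Nicolson contractivity is unavailable, and I would instead exploit the M-matrix structure established in Lemma \ref{nnlem2.4}. First I would set up the error recursion. Writing $\tilde U^{j}=(u(x_1,t_j),\dots,u(x_{2N-1},t_j))^{T}$ for the nodal values of the exact solution, inserting them into the scheme, Taylor-expanding in $t$ about $t_{j-1/2}$ (using temporal smoothness of $u$), and using the identity $(A\tilde U^{\ell})_i=-\mathcal{L}[\Pi_h u](x_i,t_\ell)=-\mathcal{L}u(x_i,t_\ell)+R_i^{\ell}$ gives
\begin{equation*}
\left(I+\tfrac{\tau}{2}A\right)\tilde U^{j}=\left(I-\tfrac{\tau}{2}A\right)\tilde U^{j-1}+\tau F^{j-\frac12}+\tau\psi^{j},\qquad \psi^{j}=\tfrac12\left(R^{j}+R^{j-1}\right)+\mathcal{O}(\tau^2),
\end{equation*}
where $R^{\ell}$ is the spatial truncation error at time $t_\ell$ governed by Lemma \ref{nnlem5.1}. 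Subtracting \eqref{nequ2.6} and setting $e^{j}=\tilde U^{j}-U^{j}$ (with $e^{0}=0$, since the initial data are interpolated exactly) yields $(I+\tfrac{\tau}{2}A)e^{j}=(I-\tfrac{\tau}{2}A)e^{j-1}+\tau\psi^{j}$, and unrolling produces the Duhamel representation
\begin{equation*}
e^{k}=\tau\sum_{j=1}^{k}B^{\,k-j}\left(I+\tfrac{\tau}{2}A\right)^{-1}\psi^{j},\qquad B:=\left(I+\tfrac{\tau}{2}A\right)^{-1}\left(I-\tfrac{\tau}{2}A\right).
\end{equation*}

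The two stability ingredients I would then establish are $\|(I+\tfrac{\tau}{2}A)^{-1}\|_\infty\le 1$ and $\|B\|_\infty\le 1$. The first follows from Lemma \ref{nnlem2.2}: by Lemma \ref{nnlem2.4} and \eqref{nequ2.9} the matrix $I+\tfrac{\tau}{2}A$ is diagonally dominant by rows with minimal excess $\delta=\min_i(1+\tfrac{\tau}{2}\sum_j a_{i,j})\ge 1$. For the second I would run a discrete maximum principle on $w=Bv$, i.e.\ on $(1+\tfrac{\tau}{2}a_{i,i})w_i=(1-\tfrac{\tau}{2}a_{i,i})v_i+\tfrac{\tau}{2}\sum_{j\ne i}g_{i,j}(w_j+v_j)$: evaluating at the index where $|w_i|$ is maximal and using $g_{i,j}>0$, the identity $a_{i,i}-\sum_{j\ne i}g_{i,j}=\sum_j a_{i,j}>0$, and the sign condition $1-\tfrac{\tau}{2}a_{i,i}\ge 0$, the estimate collapses to $(1+\tfrac{\tau}{2}\sum_j a_{i,j})\|w\|_\infty\le(1-\tfrac{\tau}{2}\sum_j a_{i,j})\|v\|_\infty$, whence $\|B\|_\infty\le1$. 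This is the step I expect to be the main obstacle. The contraction closes only because of the sign condition $1-\tfrac{\tau}{2}a_{i,i}\ge0$, which needs the mild restriction $\tau\max_i a_{i,i}\le 2$; since \eqref{nequ2.8} gives $a_{i,i}\le \tfrac{2}{1-\alpha}(2T)^{1-\alpha}$ independently of $N$, this is just $M\ge M_0$ for a fixed $M_0$ and does not change the asymptotic rate. The genuine difficulty is that $B$ is nonnormal, so power-boundedness cannot be read off its spectrum ($\rho(B)<1$ from $\Re\lambda(A)>0$ is not enough) and must be obtained through the M-matrix/maximum-principle route.

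Finally I would combine the pieces. Because $t_k=k\tau\le 1$, the Duhamel formula and the two norm bounds give
\begin{equation*}
\|e^{k}\|_\infty\le \tau\sum_{j=1}^{k}\|B\|_\infty^{\,k-j}\,\big\|(I+\tfrac{\tau}{2}A)^{-1}\big\|_\infty\,\|\psi^{j}\|_\infty\le t_k\max_{1\le j\le M}\|\psi^{j}\|_\infty\le \max_{1\le j\le M}\|\psi^{j}\|_\infty.
\end{equation*}
It then remains to insert $\|\psi^{j}\|_\infty\le \max_\ell\|R^{\ell}\|_\infty+CM^{-2}$ and substitute the three-regime bound of Lemma \ref{nnlem5.1} for $\|R^{\ell}\|_\infty$, which reproduces exactly the stated right-hand side, the spatial part being $N^{-r(1+\sigma-\alpha)}$, $N^{-r(1+\sigma-\alpha)}\ln N$, or $N^{-2}$ according as $r(1+\sigma-\alpha)\lessgtr 2$. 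I would also remark why the full consistency order survives here although Theorem \ref{nnthm4.5} suffers order reduction: the identity block in $I+\tfrac{\tau}{2}A$ keeps its inverse uniformly bounded in the $\infty$-norm, so there is no amplification by the small row sums $\sum_k a_{i,k}=\mathcal{O}(N^{-r})$ that caused the loss in the steady-state estimate.
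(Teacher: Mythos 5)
Your proposal is correct, and it shares the paper's overall skeleton --- consistency from Lemma~\ref{nnlem5.1} plus the $\mathcal{O}(M^{-2})$ temporal error, stability extracted from the M-matrix structure of Lemma~\ref{nnlem2.4} in the discrete maximum norm, and $e^{0}=0$ --- but your stability mechanism is genuinely different from the paper's. The paper never forms the amplification matrix $B$ and never proves any contraction: it works componentwise at the maximizing index $i_{0}$, keeps the implicit diagonal contribution $\tfrac{\tau}{2}a_{i_{0},i_{0}}|e_{i_{0}}^{k}|$ on the right-hand side and cancels it against the left, and settles for the one-step estimate $\|E^{k}\|_{\infty}\le(1+\tau C_{a})\|E^{k-1}\|_{\infty}+\tau\|R^{k-\frac{1}{2}}\|_{\infty}$ with $C_{a}=\tfrac{2^{2-\alpha}}{1-\alpha}T^{1-\alpha}$, which it then iterates; the resulting factor $(1+\tau C_{a})^{k}\le e^{C_{a}}$ is harmless because $k\tau\le 1$, and no restriction on $\tau$ is ever imposed. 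You instead prove genuine $\ell^{\infty}$-contractivity of the Crank--Nicolson step, $\|(I+\tfrac{\tau}{2}A)^{-1}\|_{\infty}\le 1$ and $\|B\|_{\infty}\le 1$, via a discrete maximum principle; both derivations are sound, and this buys a stronger, $T$-uniform stability statement (no $e^{C_{a}T}$ growth, relevant for long-time integration). The price is the sign condition $1-\tfrac{\tau}{2}a_{i,i}\ge 0$, i.e.\ the hypothesis $M\ge M_{0}(\alpha,T)$, which does not appear in the theorem. This is a removable blemish rather than a gap: for the finitely many $M<M_{0}$ you can abandon contractivity and use the crude unconditional bound $\|B\|_{\infty}\le\|(I+\tfrac{\tau}{2}A)^{-1}\|_{\infty}\,\|I-\tfrac{\tau}{2}A\|_{\infty}\le 1+\tau\max_{i}a_{i,i}\le 1+\tau C_{a}$, under which your Duhamel sum reproduces exactly the paper's Gronwall factor $e^{C_{a}}$ and the estimate goes through for all $M$. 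With that one-line patch, your argument proves the theorem as stated; your closing observation --- that the identity block keeps $(I+\tfrac{\tau}{2}A)^{-1}$ uniformly bounded and thus prevents the $\mathcal{O}(N^{r})$ amplification responsible for the order reduction in Theorem~\ref{nnthm4.5} --- is also the correct explanation, and is implicit (though never stated) in the paper's proof.
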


\begin{proof}
Let $e_{i}^{k}=u\left(x_{i},t_{k}\right)-u_{i}^{k}$, with $e_{i}^{0}=0$, $i=1,2,\cdots,2N-1$, $k=0,1,\cdots,M$ and $E^{k}=\left(e_{1}^{k},e_{2}^{k}, \cdots,e_{2N-1}^{k} \right)^T$.  From  \eqref{nequ2.6} with  perturbation equation, we obtain
  \begin{equation}\label{eq7}
    \left(1+\frac{\tau}{2}a_{i,i} \right)e_{i}^{k} = e_{i}^{k-1} -\frac{\tau}{2} \sum_{j=1}^{2N-1} a_{i,j}e_{j}^{k-1} -\frac{\tau}{2}\sum_{j=1,j\ne i}^{2N-1}a_{i,j}e_{j}^{k}
    + \tau R_{i}^{k-\frac{1}{2}},
  \end{equation}
where the local truncation error is  $\left|R_{i}^{k-\frac{1}{2}}\right| \leq C\left (|R_i|+M^{-2}\right)$ and $R_i$ is given in Lemma \ref{nnlem5.1}.

Let $\left|e_{i_{0}}^{k} \right|:=\|E^k\|_{\infty} =\max\limits_{j=1,2,\cdots,2N-1}\left|e_{j}^{k} \right|$.  From \eqref{eq7} and Lemma \ref{nnlem2.4}, we have 
  \begin{equation*}
    \begin{split}
       \left(1+\frac{\tau}{2}a_{i_{0},i_{0}} \right)\left|e_{i_{0}}^{k}\right| 
       & \le \left|e_{i_{0}}^{k-1}\right| +\frac{\tau}{2 }\sum_{j=1}^{2N-1}\!\left|a_{i_{0},j}\right|\left|e_{j}^{k-1}\right| 
       +\frac{\tau}{2}\sum_{j=1,j\ne i_{0}}^{2N-1} \left|a_{i_{0},j}\right| \left|e_{j}^{k} \right|
       +\tau\left|R_{i_{0}}^{k-\frac{1}{2}}\right| \\
         & \le \left(1+\tau a_{i_{0},i_{0}} \right) \|E^{k-1}\|_{\infty} + \frac{\tau}{2}a_{i_{0},i_{0}} \left|e_{i_{0}}^{k}\right| +\tau\left|R_{i_{0}}^{k-\frac{1}{2}}\right|. \\
    \end{split}
  \end{equation*}

Let  $\|R^{k-\frac{1}{2}}\|_{\infty} =\max\limits_{j=1,2,\cdots,2N-1}\left|R_{j}^{k-\frac{1}{2}} \right|$, for $k=1,2,\cdots,M$. Form \eqref{nequ2.8}, it yields
\[a_{i_{0},i_{0}}<\frac{2^{2-\alpha}}{1-\alpha}T^{1-\alpha}:=C_a.\]
Thus, using  the above equations with $\|E^{0}\|_{\infty}=0$, we get that
  \begin{equation*}
    \begin{split}
       \|E^{k}\|_{\infty} 
       & \leq \left(1+\tau C_a\right)\|E^{k-1}\|_{\infty}
       +\tau \|R^{k-\frac{1}{2}}\|_{\infty} \\
         & \leq \left(1+\tau C_{a}\right)^{k}\|E^{0}\|_{\infty} 
         +\tau  \sum_{l=1}^{k}\left(1+\tau C_a\right)^{k-l} \|R^{l-\frac{1}{2}}\|_{\infty}\\
         & \leq \begin{cases}
C \left(N^{-r\left(1+\sigma -\alpha \right)} +M^{-2}\right),     &{\rm if }  ~~r \left(1+\sigma- \alpha\right)<2, \\
C \left(N^{-r\left(1+\sigma -\alpha \right)}\ln N+M^{-2}\right), &{\rm if }  ~~r \left(1+\sigma- \alpha\right)=2, \\                 
C \left(N^{-2}+M^{-2}\right),                                    &{\rm if }  ~~r \left(1+\sigma- \alpha\right)>2.                 
	\end{cases}\\
    \end{split}
  \end{equation*}
  The proof is completed.
\end{proof}

\begin{remark}\label{rem5.1}
The above error analysis are easily extended  for  certain multidimensional problems, e.g., 
    \begin{equation}\label{eqn5.3}
     u_t\left(x,y,t\right)- \int_{\Omega} \frac{u(\bar{x},\bar{y},t)-u(x,y,t)}{\left|x-\bar{x}\right|^{\alpha}\left|y-\bar{y}\right|^{\beta}} \,d\bar{x}\,d\bar{y}
     = f\left(x,y,t\right).
    \end{equation}
 From  \cite{Cao:20} and \eqref{nequ2.6}, the full discretization of \eqref{eqn5.3} has the matrix form
    \begin{equation}\label{neqn5.5}
\left(I+\frac{\tau }{2}\mathcal{A}\right) U^{k}=\left(I-\frac{\tau }{2}\mathcal{A}\right) U^{k-1}+ \tau F^{k-\frac{1}{2}},~~~k=1,2,\cdots,M. 
\end{equation}
Here $   
 \mathcal{A}:=D_{\alpha}\otimes D_{\beta}-G_{\alpha}\otimes G_{\beta}
   $
and $D_{\alpha}$,  $D_{\beta}$, $G_{\alpha}$, $G_{\beta}$  are defined in \eqref{nequ2.7}. Moreover, the grid function are defined by
\begin{equation*}
  \begin{split}
    & U^{k}=\left(U_{1}^{k},U_{2}^{k},\cdots,U_{2N-1}^{k}\right)^{T}, ~U_{i}^{k}=\left(u_{i,1}^{k},u_{i,2}^{k},\cdots,u_{i,2N-1}^{k}\right).\\
  \end{split}
\end{equation*}

\end{remark}

\section{Numerical experiments}\label{Se:NE}
We numerically verify the above theoretical results including the   convergence rates.  The maximum norm is employed to measure the numerical errors.
All  numerical experiments are executed in  Julia 1.10.0.

\begin{example}
{\bf Steady-state problems with smooth solution.}    
Consider the steady-state problems \eqref{equ1.2}   in the domain $0<x<1$.
The exact solution of the equation is $u(x)=e^{x}\sin{x}$.
Here the forcing function is calculated by the JacobiGL algorithm, as demonstrated in  \cite{CD:15} or \cite[p.\,448]{HW:08}.
\end{example}

\begin{table}[!ht]
{\footnotesize 
\begin{center}
\caption{Maximum errors and convergence rates of \eqref{nequ2.5} with smooth solution}
{\begin{tabular}{|c| r r r |r r r|} \hline
         \multirow{2}{*}{$r$}  &  \multicolumn{3}{c|}{$\alpha=0.3$}  &  \multicolumn{3}{c|}{$\alpha=0.7$}     \\ \cline{2-7}
                              &  ${ N=25}$    & ${N=50}$     & ${ N=100}$     &  ${ N=25}$    & ${ N=50}$     & ${N=100}$     \\ \hline
       \multirow{2}{*}{$\frac{1}{10}$} &2.7121E-01   & 2.3141E-01    & 1.9727E-01    &3.2843E-01    & 2.8221E-01    & 2.4216E-01         \\
                               &     &0.2290           & 0.2303                &     &0.2188           & 0.2208                  \\  \hline
         \multirow{2}{*}{$\frac{2}{3}$}  &1.1367E-02  & 4.7263E-03    & 1.9630E-03    &1.6104E-02    & 6.6190E-03    & 2.7124E-03         \\
                               &      &1.2660          &1.2676                & &1.2827                & 1.2870                   \\ \hline                 
         \multirow{2}{*}{1} &2.3460E-02   & 1.1669E-02    & 5.8160E-03   &3.2383E-02   & 1.5898E-02    & 7.8451E-03        \\
                               &      &1.0075          & 1.0046         & &1.0264                & 1.0189                  \\ \hline
\multirow{2}{*}{$\frac{3}{2}$}  &1.7322E-01  & 1.2272E-01    & 8.6908E-02    &2.1307E-01    & 1.5136E-01    & 1.0777E-01         \\
                               &      &0.4972           &0.4979                 & &0.4934                 & 0.4900                    \\ \hline  
         \multirow{2}{*}{4} &2.6637E+03   & 1.0809E+04    &4.3522E+04   &2.6455E+03   & 1.1593E+04   & 4.9022E+04       \\
                               &        &-2.0208        & -2.0095         & & -2.1316        & -2.0802            \\  \hline
\end{tabular}\label{smooth1}}
\end{center}}
\end{table}

Table \ref{smooth1}  shows  that the standard graded meshes are worse than the uniform grid and  even lead to divergence  for $r\geq 2$.
It also indicates that an optimal convergence rate (great than one) arises in anomalous graded meshes if $r=\frac{2}{3}$, which  are in agreement with Theorem \ref{nnthm3.5}.

\begin{example}
{\bf Steady-state problems with low regularity solution.} Consider the steady-state problems \eqref{equ1.2}  under  the  low regularity solution in the domain $0<x<1$.
The exact solution of the equation is $u(x)=e^{x}x^{0.3}\left(1-x\right)^{0.3}$.
Here the forcing function is calculated by the JacobiGL algorithm \cite{CD:15,HW:08}.
\end{example}
\begin{table}[!ht]
{\footnotesize \begin{center}
\caption{Maximum errors and convergence rates of \eqref{nequ2.5} with low regularity solution}
{\begin{tabular}{|c| r r r |r r r|} \hline
     \multirow{2}{*}{$r$}  &  \multicolumn{3}{c|}{$\alpha=0.3,\,\sigma=0.3$} &\multicolumn{3}{c|}{$\alpha=0.7,\,\sigma=0.3$} \\\cline{2-7}
                              &  ${ N=25}$    & ${N=50}$     & ${ N=100}$     &  ${ N=25}$    & ${ N=50}$     & ${N=100}$      \\ \hline
       \multirow{2}{*}{$\frac{1}{10}$}    & 7.9074E-01    & 7.7541E-01   & 7.5993E-01   & 8.3789E-01    & 8.2387E-01    & 8.0921E-01     \\
                               &             & 0.0282          &0.0291             &               & 0.0243          & 0.0259      \\  \hline
         \multirow{2}{*}{1}    & 3.4398E-01    & 2.7800E-01    & 2.2493E-01    & 3.6838E-01    & 2.9473E-01    & 2.3629E-01    \\
                               &               &0.3072          &0.3056         &               & 0.3218          &0.3188          \\ \hline
    \multirow{2}{*}{$\frac{2}{1+\sigma}$}    & 3.4263E-01    & 2.7098E-01   & 2.1273E-01  & 3.7335E-01    & 2.9526E-01    & 2.3190E-01    \\
                               &               & 0.3385          & 0.3492          &               & 0.3385          & 0.3485        \\ \hline
         \multirow{2}{*}{4}    & 7.8187E+02    & 3.1534E+03   & 1.2657E+04  & 7.8879E+02   & 3.4092E+03   & 1.4311E+04     \\                            
         &               & -2.0119          & -2.0050          &               & -2.1117        & -2.0697         \\  \hline
\end{tabular}\label{ODElow:1}}
\end{center}}
\end{table}

Table \ref{ODElow:1}  shows  that it is contrary to the results of smooth functions, its sharp error estimate (far less than one) appears in standard graded meshes with the low regularity of solution \eqref{equ1.4}, 
 which  are in agreement with  Theorem \ref{nnthm4.5}.

 \begin{example}
{\bf Time-dependent problems for 1D and 2D.}
Consider the time-dependent  problems \eqref{equ1.1} for 1D and \eqref{rem5.1} for 2D  under  the  low regularity solution in the domain $0<x,y<1$, $0<t<1$.
The exact solution of the equations are  $u(x,t)=e^{x+t}x^{0.3}\left(1-x\right)^{0.3}$
and 
\[u(x,y,t)=e^{x+y+t}x^{0.3}\left(1-x\right)^{0.3}y^{0.3}\left(1-y\right)^{0.3},\] 
respectively. 
Here the forcing function is calculated by the JacobiGL algorithm \cite{CD:15,HW:08}.
\end{example}

\begin{table}[!ht]
{\footnotesize \begin{center}
\caption{Maximum errors and convergence rates of \eqref{nequ2.6} with low regularity solution}
{\begin{tabular}{|c| r r r |r r r|} \hline
\multirow{2}{*}{$r$}  &  \multicolumn{3}{c|}{$\alpha=0.3,\,\sigma=0.3$}  &  \multicolumn{3}{c|}{$\alpha=0.7,\,\sigma=0.3$}     \\ \cline{2-7}
                              &  ${ N=25}$    & ${N=50}$     & ${ N=100}$     &  ${ N=25}$    & ${ N=50}$     & ${N=100}$       \\ \hline
\multirow{2}{*}{$\frac{1}{10}$}    & 5.7985E-01   & 5.4054E-01   & 5.0377E-01    & 9.1193E-01    & 8.6425E-01    & 8.1921E-01     \\
                               &               & 0.1013          & 0.1016        &               & 0.0775          & 0.0772           \\  \hline
\multirow{2}{*}{1}    & 2.7978E-02   & 1.3211E-02   & 6.2824E-03    & 1.0937E-01    & 6.6510E-02    & 4.1126E-02     \\
                               &               & 1.0825          &1.0724        &               & 0.7175         & 0.6935          \\  \hline
\multirow{2}{*}{$\frac{2}{1+\sigma-\alpha}$}   & 2.5376E-03   & 6.7806E-04   & 1.8116E-04   & 8.6346E-03   & 2.3398E-03    & 5.9992E-04     \\
                               &               &1.9040         & 1.9041         &               & 1.8838         &1.9635          \\ \hline
\multirow{2}{*}{4}  & 2.9415E-03    & 7.4086E-04   & 1.8591E-04   & 8.5118E-03   & 2.2250E-03    & 5.7114E-04    \\
                               &               & 1.9893           & 1.9946           &               & 1.9357           & 1.9619       \\ \hline
\end{tabular}\label{PDE1}}
\end{center}}
\end{table}

\begin{table}[!ht]
{\footnotesize \begin{center}
\caption{Maximum errors and convergence rates of \eqref{neqn5.5} with low regularity solution}
{\begin{tabular}{|c| r r r |r r r|} \hline
\multirow{2}{*}{$r$}  &  \multicolumn{3}{c|}{$\alpha=0.3,\,\sigma=0.3$}  &  \multicolumn{3}{c|}{$\alpha=0.7,\,\sigma=0.3$}     \\ \cline{2-7}
                              &  ${ N=10}$    & ${N=20}$     & ${ N=40}$     &  ${ N=10}$    & ${ N=20}$     & ${N=40}$       \\ \hline
\multirow{2}{*}{1}    & 2.1188E-01   & 9.8586E-02   & 4.5734E-02   & 8.9027E-01    & 5.1261E-01    & 2.8362E-01     \\
                               &               & 1.1038           &1.1081         &               & 0.7964          & 0.8539           \\  \hline
\multirow{2}{*}{$\frac{2}{1+\sigma-\alpha}$}   & 4.0795E-02   & 1.0899E-02   & 2.9089E-03   & 3.1494E-01   & 8.5428E-02    & 2.2781E-02    \\
                               &               &1.9042          & 1.9057          &               & 1.8823          & 1.9069          \\ \hline
\multirow{2}{*}{4}  & 5.7365E-02    & 1.4645E-02  & 3.6988E-03   & 3.8907E-01   & 1.0463E-01    & 2.7542E-02    \\
                               &               & 1.9697            & 1.9853            &               & 1.8948            & 1.9255        \\ \hline
\end{tabular}\label{PDE2}}
\end{center}}
\end{table}

Table \ref{PDE1} and  \ref{PDE2} show  that second-order convergence $\mathcal{O}\left( N^{-\min \left\{r \left(1+\sigma-\alpha\right), \,2 \right\} } \right)$ can be achieved on graded meshes
for the time-dependent nonlocal problems \eqref{equ1.1} including two-dimensional case under the low regularity of solution \eqref{equ1.4}, as demonstrated in Theorem \ref{nnthm5.2} and Remark \ref{rem5.1}.
\section{Conclusions}
The nonlocal diffusion  models  involve the weakly singular kernels, which  exhibit  a severe order reduction by  many   methods.
In this work we first derive an  optimal error estimate (far less than second-order)  for the steady-state counterpart either the standard  graded meshes or anomalous graded meshes,  under  the sufficient smooth and mild  regularity   of the solution. And the second-order convergence rate are well established for the time-dependent problems including two-dimensional case  on graded meshes.
Based on the idea of  Lemmas \ref{nnlem4.4} and \ref{nnlem5.1}, 
it is  interesting to further analyze the second-order convergence rate  for  Riesz fractional  operator \cite{CD:14,HW:22,Miller:93}
\begin{equation*}
\frac{\partial ^{\alpha+1}}{\partial |x|^{\alpha+1}}u(x) =\kappa_{\alpha}\frac{\partial^2 u}{\partial x^2} \int^b_a \frac{u(y)}{|x-y|^\alpha}dy,~0<\alpha<1
\end{equation*}
with a   constant  $\kappa_{\alpha}$.



\end{document}